\newcommand{\bh}[1] {\mathcal{B}(\mathcal{#1})}
\newcommand{\diag} {\operatorname{diag}}
\newtheorem{theorem}{Theorem}[section]
\newtheorem{corollary}[theorem]{Corollary}
\newtheorem{proposition}[theorem]{Proposition}    
\newtheorem{lemma}[theorem]{Lemma}
\theoremstyle{definition}
\newtheorem{definition}[theorem]{Definition}
\newtheorem{example}[theorem]{Example}
\newtheorem{remark}[theorem]{Remark}
\begin{document}

\title{Regular Representations of Lattice Ordered Semigroups}
\author{Boyu Li}
\address{Pure Mathematics Department\\University of Waterloo\\Waterloo, ON\\Canada \ N2L--3G1}
\email{b32li@math.uwaterloo.ca}
\date{\today}

\subjclass[2010]{43A35 ,47A20 ,47D03 }
\keywords{ Nica covariant, regular dilation, positive definite, lattice ordered group}

\begin{abstract} We establish a necessary and sufficient condition for a representation of a lattice ordered semigroup to be regular, in the sense that certain extensions are completely positive definite. This result generalizes a theorem due to Brehmer where the lattice ordered group was taken to be $\mathbb{Z}_+^\Omega$. As an immediate consequence, we prove that contractive Nica-covariant representations are regular. We also introduce an analog of commuting row contractions on lattice ordered group and show that such a representation is regular. 
\end{abstract}

\maketitle

\section{Introduction}

A contractive map of a group has a unitary dilation if and only if it is completely positive definite, in the sense that certain operator matrices are positive. Consequently, for a semigroup $P$ contained in a group $G$, a contractive representation of $P$ has a unitary dilation if and only if it can be extended to a completely positive definite map on $G$. Introduced in \cite{DFK2014}, such representations on a semigroup are called completely positive definite. In particular, when the group is lattice-ordered, a representation is called regular if a certain natural extension to the group is completely positive definite. 

Nica \cite{Nica1992} introduced the study of isometric representations of quasi-lattice ordered semigroups. This generalized the notion of doubly commuting representations of semigroups with nice generators. Laca and Raeburn \cite{Laca1996} developed the theory, and showed there is a universal $C^*$-algebra for isometric Nica covariant representations. This field has also been explored in \cite{Solel2008}.

Davidson, Fuller, and Kakariadis \cite{Fuller2013, DFK2014} defined and studied contractive Nica-covariant representation on lattice ordered semigroups. The regularity of such representations was seen as a critical property in describing the $C^*$-envelope of semicrossed products. They posed a question \cite[Question 2.5.11]{DFK2014} of whether regularity is automatic for Nica-covariant representations. Fuller \cite{Fuller2013} established this for certain abelian semigroups. 

This paper answers this question affirmatively by establishing a necessary and sufficient condition for a representation of a lattice ordered semigroup to be regular. This condition generalizes a result of Brehmer \cite{Brehmer1961}, where he gave a necessary and sufficient condition for a representation of $\mathbb{Z}_+^\Omega$ to be regular. As an immediate consequence of Brehmer's condition, it is known that doubly commuting representations and commuting column contractions are both regular \cite[Proposition I.9.2]{SFBook}. This paper generalizes both results in the lattice ordered group settings. We first show that a Nica-covariant representation, which is an analog of a doubly commuting representation, is regular. We then introduce an analog of commuting column contractions, which is shown to be regular as well. 

\section{Preliminaries}

Let $G$ be a group. A unital semigroup $P\subseteq G$ is called a cone. A cone $P$ is \emph{spanning} if $PP^{-1}=G$, and is \emph{positive} when $P\bigcap P^{-1}=\{e\}$. A positive cone $P$ defines a partial order on $G$ via $x\leq y$ when $x^{-1}y\in P$. $(G,P)$ is called \emph{totally ordered} if $G=P\bigcup P^{-1}$, in which case the partial order on $G$ is a total order. If any finite subset of $G$ with a upper bound in $P$ also has a least upper bound in $P$, the pair $(G,P)$ is called a \emph{quasi-lattice ordered group}. We call this partial order \emph{compatible with the group} if for any $x\leq y$ and $g\in G$, we always have $gx\leq gy$ and $xg\leq yg$. Equivalently, the corresponding positive cone satisfies a normality condition that $gPg^{-1}\subseteq P$ for any $g\in G$, and thus $x\leq y$ whenever $yx^{-1}\in P$ as well. When $P$ is a positive spanning cone of $G$ whose partial order is compatible with the group, if every two elements $x,y\in G$ have a least upper bound (denoted by $x\vee y$) and a greatest lower bound (denoted by $x\wedge y$), the pair $(G,P)$ is called a \emph{lattice ordered group}. It is immediate that a lattice ordered group is also a quasi-lattice ordered group. 

\begin{example} (Examples of Lattice Ordered Groups)
\begin{enumerate}
\item $(\mathbb{Z},\mathbb{Z}_{\geq 0})$ is a lattice ordered group. In fact, this partial order is also a total order. More generally, any totally ordered group $(G,P)$ is also a lattice ordered group.
\item If $(G_i, P_i)_{i\in I}$ is a family of lattice ordered group, their direct product $(\prod G_i, \prod P_i)$ is also a lattice ordered group.
\item Let $G=C_\mathbb{R}[0,1]$, the set of all continuous functions on $[0,1]$. Let $P$ be the set of all non-negative functions in $G$. Then $(G,P)$ is a lattice ordered group.
\item Let $\mathcal{T}$ be a totally ordered set. A permutation $\alpha$ on $\mathcal{T}$ is called order preserving if for any $p,q\in\mathcal{T}$, $p\leq q$, we also have $\alpha(p)\leq\alpha(q)$. Let $G$ be the set of all order preserving permutations, which is clearly a group under composition. Let $P=\{\alpha\in G: \alpha(t)\geq t,\mbox{ for all }t\in\mathcal{T}\}$. Then $(G,P)$ is a non-abelian lattice ordered group \cite{LatticeOrderBookIntro}.
\item Let $\mathbb{F}_n$ be the free group of $n$ generators, and $\mathbb{F}_n^+$ be the semigroup generated by the $n$-generators. Then $(\mathbb{F}_n,\mathbb{F}_n^+)$ defines a quasi-lattice ordered group \cite[Examples 2.3]{Nica1992}. However, the induced partial order is not compatible with the group and the pair is not a lattice ordered group. 
\end{enumerate}
\end{example}

For any element $g\in G$ of a lattice ordered group $(G,P)$, $g$ can be written uniquely as $g=g_+g_-^{-1}$ where $g_+,g_-\in P$, and $g_+\wedge g_-=e$. In fact, $g_+=g\vee e$ and $g_-=g^{-1}\vee e$. Here are some important properties of a lattice ordered group:

\begin{lemma}\label{lm.LatticeBasic} Let $(G,P)$ be a lattice order group, and $a,b,c\in G$.
\begin{enumerate}
\item $a(b\vee c)=(ab)\vee(ac)$ and $(b\vee c)a=(ba)\vee(ca)$. A similar distributive law holds for $\wedge$.
\item $(a\wedge b)^{-1}=a^{-1}\vee b^{-1}$ and similarly $(a\vee b)^{-1}=a^{-1}\wedge b^{-1}$.
\item $a\geq b$ if and only if $a^{-1}\leq b^{-1}$.
\item $a(a\wedge b)^{-1} b=a\vee b$. In particular, when $a\wedge b=e$, $ab=ba=a\vee b$. 
\item If $a,b,c\in P$, then $a\wedge(bc)\leq (a\wedge b)(a\wedge c)$.
\end{enumerate}
\end{lemma}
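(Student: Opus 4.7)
The plan is to treat the first three properties as formal consequences of compatibility and to give substantive arguments only for (4) and (5). For (1), compatibility makes left and right translation order-preserving bijections of $G$, so they carry joins to joins and meets to meets: $a(b\vee c)$ is an upper bound of $\{ab,ac\}$, and any other upper bound $d$ satisfies $a^{-1}d\geq b,c$, hence $a^{-1}d\geq b\vee c$ and so $d\geq a(b\vee c)$. For (3), if $a\geq b$ then $b^{-1}a\in P$, and the normality $bPb^{-1}\subseteq P$ yields $ab^{-1}=b(b^{-1}a)b^{-1}\in P$, which reads $a^{-1}\leq b^{-1}$; the converse is symmetric. Part (2) is immediate from (3): inverting $a\wedge b\leq a,b$ gives $(a\wedge b)^{-1}\geq a^{-1},b^{-1}$, so $(a\wedge b)^{-1}\geq a^{-1}\vee b^{-1}$, and dually $(a^{-1}\vee b^{-1})^{-1}\leq a\wedge b$.

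For (4) I would reduce to the orthogonal case. Setting $c=a\wedge b$, $x=c^{-1}a$, $y=c^{-1}b$, part (1) shows $x,y\in P$ with $x\wedge y=e$, and the identities $a(a\wedge b)^{-1}b=cxy$ and $a\vee b=c(x\vee y)$ reduce the claim to proving $xy=x\vee y$ whenever $x,y\in P$ and $x\wedge y=e$. The inequality $xy\geq x\vee y$ is immediate. For the reverse, I compute
\[
(xy)(x\vee y)^{-1}=xy(x^{-1}\wedge y^{-1})=(xyx^{-1})\wedge x
\]
using (2) and then (1), and then observe that conjugation by $x$, being a composition of order-preserving translations, is an order-isomorphism, so it carries $x\wedge y=e$ to $x\wedge xyx^{-1}=e$. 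Hence the right-hand side is $e$, giving $xy=x\vee y$; the symmetric argument yields $yx=x\vee y$ as well.

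For (5) I would derive two auxiliary bounds from distributivity. Since $(a\wedge b)c=ac\wedge bc$, and $a\wedge bc\leq a\leq ac$ (as $c\in P$) and $a\wedge bc\leq bc$, one has $a\wedge bc\leq (a\wedge b)c$; the mirror argument using $b(a\wedge c)=ba\wedge bc$ gives $a\wedge bc\leq b(a\wedge c)$. Noting $a\wedge b\in P$ (since $e$ is a lower bound of $\{a,b\}$), chaining these yields
\[
a\wedge bc\leq a\wedge (a\wedge b)c\leq (a\wedge b)(a\wedge c),
\]
where the first step combines the first bound with $a\wedge bc\leq a$, and the second applies the mirror bound with $a\wedge b$ in place of $b$.

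The main obstacle I anticipate is the commutation step in (4). Without invoking normality somewhere, the reverse inequality $xy\leq x\vee y$ for orthogonal $x,y\in P$ does not follow from (1)--(3) alone; the conjugation identity $x\wedge xyx^{-1}=e$ is what really forces orthogonal positive elements to commute. Everything else is bookkeeping with distributivity and inversion.
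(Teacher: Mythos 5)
Your proof is correct. The paper itself offers no argument for this lemma --- it is stated as a list of standard facts with a pointer to the literature on lattice ordered groups --- so there is nothing of the author's to compare against; what you have written is a complete, self-contained derivation from the definitions in Section 2, which is strictly more than the paper provides. The structure is sound: (1) and (3) come straight from compatibility and normality of $P$, and (2) follows from (3) together with antisymmetry of the order. The two substantive items are handled correctly. In (4), the reduction via $c=a\wedge b$, $x=c^{-1}a$, $y=c^{-1}b$ (so $x,y\in P$, $x\wedge y=e$ by left distributivity) turns the identity into the orthogonal case, and the computation $(xy)(x\vee y)^{-1}=xy(x^{-1}\wedge y^{-1})=(xyx^{-1})\wedge x$, combined with the fact that conjugation by $x$ is a lattice automorphism sending $x\wedge y=e$ to $x\wedge xyx^{-1}=e$, gives $xy=x\vee y$ outright; you are right that this conjugation step is where normality does irreplaceable work, since the statement fails in merely quasi-lattice ordered groups such as $(\mathbb{F}_n,\mathbb{F}_n^+)$. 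In (5), the two one-sided bounds $a\wedge bc\leq (a\wedge b)c$ and $a\wedge bc\leq b(a\wedge c)$, followed by substituting $a\wedge b$ (which lies in $P$) for $b$ in the second, chain together exactly as you say. No gaps.
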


One may refer to \cite{LatticeOrderBook} for a detailed discussion of this subject. Notice by statement (4) of Lemma \ref{lm.LatticeBasic} $g_+,g_-$ commute and thus $g=g_+g_-^{-1}=g_-^{-1} g_+$.

For a group $G$, a unital map $S:G\to\bh{H}$ is called \emph{completely positive definite} if for any $g_1,g_2,\cdots,g_n\in G$
$$\big[S(g_i^{-1} g_j)\big]_{1\leq i,j\leq n}\geq 0.$$
Here, $i$ denotes the row index and $j$ the column index, and we shall follow this convention throughout this paper. A well known result (\cite{Naimark1943}, see also \cite[Proposition I.7.1]{SFBook}) stated that a completely positive definite map of $G$ has a unitary dilation. The converse is elementary. 

\begin{theorem}\label{thm.NagyFoias} If $S:G\to\bh{H}$ is a unital completely positive definite map. Then there exists a unitary representation $U:G\to\bh{K}$ where $\mathcal{H}$ is a subspace of $\mathcal{K}$, and that $P_\mathcal{H} U(g)|_\mathcal{H} = S(g)$. Moreover, this unitary representation can be chosen to be minimal in the sense of $\mathcal{K}=\bigvee_{g\in G} U(g)\mathcal{H}$.
\end{theorem}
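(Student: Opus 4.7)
The plan is to run the standard GNS-style construction adapted for a group, using $S$ as the ``kernel'' that provides the inner product. First I form the algebraic free left $G$-module $V=\bigoplus_{g\in G}\mathcal{H}$, viewed as finitely supported functions $\xi:G\to\mathcal{H}$ (write elementary tensors as $\delta_g\otimes h$). On $V$ I define a sesquilinear form by
\[
\Big\langle \sum_j \delta_{g_j}\otimes h_j,\; \sum_i \delta_{g_i}\otimes h_i\Big\rangle
   \;=\; \sum_{i,j} \langle S(g_i^{-1}g_j)h_j, h_i\rangle_{\mathcal{H}}.
\]
The completely positive definite hypothesis, applied to the tuple $(g_1,\dots,g_n)$, says exactly that this form is positive semidefinite. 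Quotienting $V$ by the null space $N=\{\xi:\langle\xi,\xi\rangle=0\}$ and completing gives a Hilbert space $\mathcal{K}$, via the usual Cauchy--Schwarz argument that shows $N$ is a subspace.

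Next I define the left regular action by $U(g)(\delta_{g'}\otimes h)=\delta_{gg'}\otimes h$ and extend linearly. The computation
\[
\big\langle U(g)(\delta_{g_j}\otimes h_j),\,U(g)(\delta_{g_i}\otimes h_i)\big\rangle
= \langle S((gg_i)^{-1}(gg_j))h_j,h_i\rangle
= \langle S(g_i^{-1}g_j)h_j,h_i\rangle
\]
shows $U(g)$ is isometric on $V$, hence descends to $\mathcal{K}/N$ and extends to an isometry of $\mathcal{K}$; since $U(g^{-1})$ is a two-sided inverse, each $U(g)$ is unitary, and $U(gh)=U(g)U(h)$ on elementary tensors, so $U$ is a unitary representation. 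Embed $\iota:\mathcal{H}\to\mathcal{K}$ by $h\mapsto[\delta_e\otimes h]$; because $S(e)=I$ (unitality), $\|\iota h\|^2=\langle S(e)h,h\rangle=\|h\|^2$, so $\iota$ is an isometric embedding and I identify $\mathcal{H}$ with its image. For the dilation identity, for $h,h'\in\mathcal{H}$,
\[
\langle U(g)\iota h,\iota h'\rangle_\mathcal{K}
= \langle \delta_g\otimes h,\delta_e\otimes h'\rangle
= \langle S(e^{-1}g)h,h'\rangle
= \langle S(g)h,h'\rangle_\mathcal{H},
\]
which gives $P_\mathcal{H} U(g)|_\mathcal{H}=S(g)$. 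Minimality is automatic from the construction, since $\mathcal{K}$ is by definition the closure of the span of classes $[\delta_g\otimes h]=U(g)\iota h$.

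The main technical point is checking that $U(g)$ is well defined on the quotient $V/N$, but this follows from the isometry computation above: $U(g)$ sends $N$ to $N$. Everything else is routine verification. There is no real obstacle beyond being careful with the index convention in the kernel $S(g_i^{-1}g_j)$, which is precisely why the form above is positive rather than sign-indefinite.
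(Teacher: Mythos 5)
Your construction is correct: it is the standard Naimark/GNS dilation, and the paper itself offers no proof of this theorem, simply citing Naimark and \cite[Proposition I.7.1]{SFBook}, where essentially this same argument appears. The one point you leave implicit --- that the form is Hermitian, i.e.\ $S(g^{-1})=S(g)^*$ --- is automatic, since a positive sesquilinear form on a complex vector space is Hermitian by polarization (or directly from positivity of the $2\times 2$ matrices with $g_1=e$, $g_2=g$), so there is no gap.
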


When $(G,P)$ is a lattice ordered group, we may simultaneously increase or decrease $g_i$ so that it would suffices to take $g_i\in P$:

\begin{lemma} Let $S:G\to\bh{H}$ be a map, then the following are equivalent:
\begin{enumerate}
\item $\big[S(g_i^{-1} g_j)\big]_{1\leq i,j\leq n} \geq 0$ for any $g_1,g_2,\cdots,g_n\in G$;
\item $\big[S(g_i g_j^{-1})\big]_{1\leq i,j\leq n} \geq 0$ for any $g_1,g_2,\cdots,g_n\in G$;
\item $\big[S(p_i^{-1} p_j)\big]_{1\leq i,j\leq n} \geq 0$ for any $p_1,p_2,\cdots,p_n\in P$;
\item $\big[S(p_i p_j^{-1})\big]_{1\leq i,j\leq n} \geq 0$ for any $p_1,p_2,\cdots,p_n\in P$.
\end{enumerate}
\end{lemma}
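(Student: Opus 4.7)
The plan is to reduce everything to the single nontrivial implication (3)$\Rightarrow$(1), with everything else following from routine substitutions. First, (1)$\Leftrightarrow$(2) is immediate: replacing $(g_1,\ldots,g_n)$ by $(g_1^{-1},\ldots,g_n^{-1})$ is a bijection on $n$-tuples in $G$ and carries the matrix in (1) to the matrix in (2). Second, (1)$\Rightarrow$(3) and (2)$\Rightarrow$(4) are trivial because $P\subseteq G$. Consequently, once (3)$\Rightarrow$(1) is established, the mirror-image argument will give (4)$\Rightarrow$(2), and all four conditions will be equivalent.

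For the key step (3)$\Rightarrow$(1), the idea is that the lattice structure lets me replace an arbitrary finite tuple in $G$ by a tuple in $P$ without changing the relevant products. Given $g_1,\ldots,g_n\in G$, I would set
\[
  q = g_1^{-1}\vee g_2^{-1}\vee\cdots\vee g_n^{-1}\vee e.
\]
This join exists because $(G,P)$ is lattice ordered, and $q\in P$ since $q\geq e$. For each $i$ one has $q\geq g_i^{-1}$; right-multiplying by $g_i$, which preserves the order by compatibility, yields $qg_i\geq e$, so $p_i:=qg_i$ lies in $P$. A direct cancellation gives
\[
  p_i^{-1}p_j = g_i^{-1}q^{-1}qg_j = g_i^{-1}g_j,
\]
so the matrix in (1) coincides with the matrix in (3) evaluated at $(p_1,\ldots,p_n)$, which is positive by hypothesis. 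The proof of (4)$\Rightarrow$(2) is the same with $p_i := g_iq$ for the identical $q$, using left multiplication by $g_i$ in place of right multiplication.

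The only real content is the construction of the common shift $q$, and this is precisely where the lattice ordered hypothesis is used: the join of an arbitrary finite subset of $G$ must exist in $G$, which is not guaranteed in a merely quasi-lattice ordered group. Beyond this, every step is a formal manipulation using the basic properties of lattice ordered groups.
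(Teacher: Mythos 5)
Your proposal is correct and follows essentially the same route as the paper: the paper's shift is $g=\vee_{i=1}^n (g_i)_-=\bigl(\vee_{i=1}^n g_i^{-1}\bigr)\vee e$, which is exactly your $q$, and the cancellation $p_i^{-1}p_j=g_i^{-1}g_j$ is identical. The only cosmetic difference is that you verify $p_i\in P$ directly from order compatibility, whereas the paper factors $p_i=g(g_i)_-^{-1}(g_i)_+$; both are fine.
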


\begin{proof}

Since $G$ is a group, by considering $g_i$ and $g_i^{-1}$, it is clear that (1) and (2) are equivalent. Statement (1) clearly implies statement (3), and conversely when statement (3) holds true, for any $g_1,\cdots,g_n\in G$, take $g=\vee_{i=1}^n \left(g_i\right)_-$. Denote $p_i=g\cdot g_i$ and notice that from our choice of $g$, $g\geq \left(g_i\right)_-$. Hence, $$p_i=g\cdot \left(g_i\right)_-^{-1} \left(g_i\right)_+\in P.$$
But notice that for each $i,j$, $p_i^{-1} p_j=g_i^{-1} g^{-1} gg_j=g_i^{-1}g_j$. Therefore, $$\big[S(g_i^{-1} g_j)\big]_{1\leq i,j\leq n}=\big[S(p_i^{-1} p_j)\big]_{1\leq i,j\leq n}\geq 0.$$
Similarly, statements (2) and (4) are equivalent. \end{proof}

For the convenience of computation, when $(G,P)$ is a lattice ordered group, $S:G\to\bh{H}$ is called completely positive definite when $$\big[S(p_i p_j^{-1})\big]_{1\leq i,j\leq n} \geq 0.$$
For a spanning cone $P\subset G$, a contractive representation $T:P\to\bh{H}$ is called \emph{completely positive definite} when it can be extended to some completely positive definite map on $G$. There is a well-known result due to Sz.Nagy that every contraction has a unitary dilation, and therefore, every contractive representation of $\mathbb{Z}_+$ is completely positive definite. Ando \cite{Ando1963} further showed that every contractive representation of $\mathbb{Z}_+^2$ is completely positive definite. However, Parrott \cite{Parrott1970} provided an counterexample where a contractive representations on $\mathbb{Z}_+^3$ is not completely positive definite. 

For a completely positive definite representation $T$ on a lattice ordered semigroup, one might wonder what its extension looks like. In a lattice ordered group $(G,P)$, any element $g\in G$ can be uniquely written as $g=g_+g_-^{-1}$ where $g_\pm\in P$ and $g_+\wedge g_-=e$. Suppose $U:G\to\bh{K}$ is a unitary dilation of $T$, we can make the following observation.
\begin{eqnarray*}
\tilde{T}(g) &=& P_\mathcal{H} U(g)\big|_\mathcal{H} \\
&=& P_\mathcal{H} U(g_-)^*U(g_+)\big|_\mathcal{H}.
\end{eqnarray*}
This motivates the question of whether the extension $\tilde{T}(g)=T(g_-)^*T(g_+)$ is completely positive definite. We call a contractive representation $T$ \emph{right regular} whenever $\tilde{T}$ defined in such way is completely positive definite. There is a dual definition that call $T$ \emph{left regular} (or \emph{$*$-regular}) if $\overline{T}(g)=T(g_+)T(g_-)^*$ is completely positive definite.

When $(G,P)$ is a lattice ordered group, $(G,P^{-1})$ is also a lattice ordered group. A representation $T:P\to\bh{H}$ give raise to a dual representation $T^*:P^{-1}\to\bh{H}$ where $T^*(p^{-1})=T(p)^*$. Consider $g=g_+g_-^{-1}=g_-^{-1} \left(g_+^{-1}\right)^{-1}$, we have $$\tilde{T}(g)=T(g_-)^* T(g_+)=T^*(g_-^{-1}) T^*(g_+^{-1})^*=\overline{T^*}(g).$$ Hence, $\tilde{T}$ agrees with $\overline{T^*}$ on $G$. Therefore, we obtain the following Proposition.

\begin{proposition} Let $(G,P)$ be a lattice ordered group, and $T:P\to\bh{H}$ be a representation and $T^*$ defined as above. Then the following are equivalent
\begin{enumerate}
\item $T$ is right regular.
\item $T^*$ is left regular.
\item For any $p_1,\cdots,p_n\in P$, $[\tilde{T}(p_ip_j^{-1})]\geq 0$ (equivalently, $[\overline{T^*}(p_ip_j^{-1})]\geq 0$).
\end{enumerate}
\end{proposition}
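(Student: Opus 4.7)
The plan is to observe that the entire proposition rests on a single identity, namely $\tilde T(g)=\overline{T^*}(g)$ for all $g\in G$, which has already been established in the paragraph preceding the statement. That identity arises because, under the reversed positive cone $P^{-1}$, the canonical decomposition of $g$ is $g=g_-^{-1}\cdot(g_+^{-1})^{-1}$ with $g_-^{-1},g_+^{-1}\in P^{-1}$, and then the definition $T^*(p^{-1})=T(p)^*$ yields $\overline{T^*}(g)=T^*(g_-^{-1})\,T^*(g_+^{-1})^*=T(g_-)^*T(g_+)=\tilde T(g)$.

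With this identity in hand, the equivalence of (1) and (2) is immediate: $T$ is right regular precisely when $\tilde T$ is a completely positive definite map on $G$, while $T^*$ is left regular precisely when $\overline{T^*}$ is such a map, and these two maps coincide.

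For the equivalence with (3), I would invoke the preceding lemma, which reduces the complete positive definiteness of a map $S:G\to\bh H$ to the positivity of the operator matrices $[S(p_ip_j^{-1})]$ as $p_1,\dots,p_n$ range over $P$. Applied with $S=\tilde T$, this yields the first formulation of (3), and the parenthetical equivalent form with $\overline{T^*}$ then comes for free from the identity $\tilde T=\overline{T^*}$.

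The only thing resembling an obstacle is careful bookkeeping around the $P^{-1}$-decomposition, in particular verifying that the $P^{-1}$-positive and $P^{-1}$-negative parts of $g$ are $g_-^{-1}$ and $g_+^{-1}$ respectively; this is a direct consequence of $(a\wedge b)^{-1}=a^{-1}\vee b^{-1}$ from Lemma \ref{lm.LatticeBasic}, together with the fact that reversing the order on $G$ interchanges $\vee$ and $\wedge$. Beyond that step, no genuinely new computation is required, and the proposition is purely a matter of assembling the preceding observations.
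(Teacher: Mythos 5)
Your proposal is correct and follows exactly the route the paper takes: the paper gives no separate proof of this proposition, deriving it directly from the identity $\tilde T=\overline{T^*}$ established in the preceding paragraph together with the earlier lemma reducing complete positive definiteness to positivity of $[S(p_ip_j^{-1})]$ over $p_i\in P$. Your extra check that the $P^{-1}$-positive and negative parts of $g$ are $g_-^{-1}$ and $g_+^{-1}$ is the same bookkeeping the paper performs implicitly.
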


Due to this equivalence, we shall focus on the right regularity and call a representation \emph{regular} when it is right regular. Regular dilations were first studied by Brehmer \cite{Brehmer1961}, and they were also studied in \cite{Nagy1961, Halperin1962}. A necessary and sufficient condition for regularity for the abelian group $\mathbb{Z}^\Omega$ was proven by Brehmer \cite[Theorem I.9.1]{SFBook}.

\begin{theorem}[Brehmer]\label{thm.Brehmer} Let $\Omega$ be a set, and denote $\mathbb{Z}^\Omega$ to be the set of $(t_\omega)_{\omega\in\Omega}$ where $t_\omega\in\mathbb{Z}$ and $t_\omega=0$ except for finitely many $\omega$. Also, for a finite set $V\subset\Omega$, denote $e_V\in\mathbb{Z}^\Omega$ to be $1$ at those $\omega\in V$ and $0$ elsewhere. If $\{T_\omega\}_{\omega\in\Omega}$ is a family of commuting contractions, we may define a contractive representation $T:\mathbb{Z}_+^\Omega\to\bh{H}$ by $$T(t_\omega)_{\omega\in\Omega}=\prod_{\omega\in\Omega} T_\omega^{t_\omega}.$$
Then $T$ is right regular if and only if for any finite $U\subseteq\Omega$, the operator $$\sum_{V\subseteq U} (-1)^{|V|} T(e_V)^* T(e_V)\geq 0.$$
\end{theorem}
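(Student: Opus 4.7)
The plan is to prove the two directions of this biconditional separately. The forward direction (necessity) follows from the existence of a minimal unitary dilation of $\tilde T$ together with the orthogonality structure that regularity imposes on the dilation; the converse (sufficiency) is the technical heart and requires constructing the dilation from the hypothesized Brehmer positivity.

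For necessity, assume $T$ is right regular, so $\tilde T$ extends to a completely positive definite map on $\mathbb Z^\Omega$ and admits a minimal unitary dilation $U:\mathbb Z^\Omega\to \bh K$ by Theorem \ref{thm.NagyFoias}. Set $U_\omega := U(e_\omega)$; these are commuting unitaries with $T_\omega = P_\mathcal H U_\omega|_\mathcal H$. The key structural consequence of regularity is that for disjoint $A,B \subseteq U$,
\[
T(e_A)^* T(e_B) = \tilde T(e_B - e_A) = P_\mathcal H U(e_A)^* U(e_B)|_\mathcal H,
\]
which, compared with the factorization $T(e_A)^* T(e_B) = P_\mathcal H U(e_A)^* P_\mathcal H U(e_B)|_\mathcal H$, yields the orthogonality $P_\mathcal H U(e_A)^* Q U(e_B) P_\mathcal H = 0$ where $Q = I - P_\mathcal H$. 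Combining the identity $T(e_V)^* T(e_V) = P_\mathcal H - P_\mathcal H U(e_V)^* Q U(e_V) P_\mathcal H$ with inclusion--exclusion over $V \subseteq U$ and this orthogonality reshapes the Brehmer sum into a manifestly positive quadratic form on $\mathcal H$: the orthogonality is what forces the cross-terms between different subsets to collapse.

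For sufficiency, assume $B_W := \sum_{V\subseteq W}(-1)^{|V|} T(e_V)^* T(e_V) \geq 0$ for every finite $W \subseteq \Omega$, and construct a regular unitary dilation directly. The key preliminary identity, which I would establish by reorganizing the defining sum of $B_{W\cup\{\omega\}}$ according to whether $\omega$ appears in the summation index, is
\[
B_W - B_{W\cup\{\omega\}} = T(e_\omega)^* B_W T(e_\omega)\qquad(\omega\notin W).
\]
Combined with $B_{W\cup\{\omega\}}\geq 0$, this yields the contractivity $T(e_\omega)^* B_W T(e_\omega) \leq B_W$, so by Douglas's lemma the positive square roots $\Delta_W := B_W^{1/2}$ give rise to contractions $C_{W,\omega}$ with $\Delta_W T(e_\omega) = C_{W,\omega} \Delta_W$. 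Assembling the defect spaces $\mathcal D_W := \overline{\Delta_W \mathcal H}$ into a bundle indexed over $\mathbb Z_+^\Omega$ and forming $\mathcal K_+ = \mathcal H \oplus \bigoplus_p \mathcal D_{\mathrm{supp}(p)}$ (or a finer variant indexed by $p$ itself), each $T_\omega$ extends to a commuting isometry via the $C_{W,\omega}$, and then to a commuting unitary on a still larger $\mathcal K \supseteq \mathcal K_+$ by the standard Sz.-Nagy enlargement. Verifying $P_\mathcal H U(g)|_\mathcal H = \tilde T(g)$ for all $g \in \mathbb Z^\Omega$ completes the finite-$\Omega$ case; the infinite case follows by a direct-limit argument over finite subsets of $\Omega$.

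The main obstacle is the sufficiency direction, specifically verifying that the dilating isometries commute on the enlarged space and that the compression identity holds for \emph{all} group elements (including those $g$ for which both $g_+$ and $g_-$ are nonzero). Both reduce to delicate compatibility relations between the intertwining contractions $C_{W,\omega}$ for different $W$ and $\omega$, ultimately controlled by the contractivity identity above and by the multi-parameter inclusion--exclusion structure on $\mathbb Z_+^\Omega$.
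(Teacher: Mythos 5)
The paper does not prove this theorem---it quotes it from \cite{Brehmer1961} and \cite[Theorem I.9.1]{SFBook}---so your proposal must stand on its own, and as written it has genuine gaps in both directions. For necessity, the orthogonality $P_\mathcal{H}U(e_A)^*QU(e_B)P_\mathcal{H}=0$ for disjoint $A,B$ is a correct consequence of regularity, but the mechanism you describe does not produce a positive form. Substituting $T(e_V)^*T(e_V)=P_\mathcal{H}-P_\mathcal{H}U(e_V)^*QU(e_V)P_\mathcal{H}$ into the Brehmer sum gives, for $U\neq\emptyset$, the \emph{single} alternating sum $\sum_{V\subseteq U}(-1)^{|V|+1}P_\mathcal{H}U(e_V)^*QU(e_V)P_\mathcal{H}$; there are no ``cross-terms between different subsets'' in this expression for your orthogonality to collapse, and the subsets $V$ occurring here generally intersect, so the disjointness hypothesis never applies. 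Already for $U=\{1,2\}$ one needs $\|QU_1U_2h\|^2\le\|QU_1h\|^2+\|QU_2h\|^2$, which does not follow from $QU_1\mathcal{H}\perp QU_2\mathcal{H}$ by the manipulations you indicate. The argument that actually works introduces a genuine double sum: one checks, using regularity for \emph{every} pair $V,V'\subseteq U$ (via $(e_{V'}-e_V)_\pm=e_{V'\setminus V},e_{V\setminus V'}$), that
\begin{equation*}
\Bigl\|\sum_{V\subseteq U}(-1)^{|V|}U(e_V)^*T(e_V)h\Bigr\|^2=\sum_{V,V'\subseteq U}(-1)^{|V|+|V'|}\|T(e_{V\cup V'})h\|^2=\Bigl\langle\sum_{S\subseteq U}(-1)^{|S|}T(e_S)^*T(e_S)h,\,h\Bigr\rangle,
\end{equation*}
the last equality because $\sum_{V\cup V'=S}(-1)^{|V|+|V'|}=(-1)^{|S|}$. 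That is the identity you need to state and prove; your sketch does not reach it.

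For sufficiency, steps (i)--(ii) (the identity $Z_J-Z_{J\cup\{\omega\}}=T_\omega^*Z_JT_\omega$ and the Douglas-lemma contractions) are correct and match Lemma \ref{lm.brehmer1} and Lemma \ref{lm.Douglas}, but step (iii) is where all of the content of Brehmer's theorem lives, and it is entirely deferred: you assert that the $C_{W,\omega}$ let each $T_\omega$ extend to commuting isometries on a defect bundle and that the compression identity holds for all $g$, and then acknowledge in your closing paragraph that precisely these verifications are open. No formula for the extended isometries is given, and the one-variable Sch\"affer-type construction does not assemble into commuting extensions without exactly the compatibility relations you have not established; so this is a restatement of the problem, not a proof. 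A route that actually closes is to bypass the explicit isometric construction: prove directly that the kernels $[\tilde{T}(p_i-p_j)]$ are positive by exhibiting a factorization $X=R^*R$ with $R$ triangular and entries $Z_{J\setminus U}^{1/2}T_{U\setminus V}$ (this is exactly the paper's Proposition \ref{prop.decomp} together with Lemma \ref{lm.telescope}), extend from multi-indices with entries in $\{0,1\}$ to general elements of $\mathbb{Z}_+^\Omega$, and then invoke Theorem \ref{thm.NagyFoias}. I would recommend reorganizing the sufficiency argument around such a factorization rather than around commuting isometric liftings.
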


It turns out that not all completely positive definite representations are regular.

\begin{example} It follows from Brehmer's theorem that a representation $T$ on $\mathbb{Z}_+^2$ is regular if and only if $T_1=T(e_1), T_2=T(e_2)$ are contractions that satisfy $$I-T_1^* T_1-T_2^* T_2+(T_1 T_2)^* T_1T_2\geq 0.$$
Take $T_1=T_2=\left[ \begin{array}{cc}
0 & 1 \\
0 & 0 \\ \end{array} \right]$ and notice, $$I-T_1^* T_1-T_2^* T_2+(T_1 T_2)^* T_1T_2=\left[ \begin{array}{cc}
1 & 0 \\
0 & -1 \\ \end{array} \right].$$

Brehmer's result implies that $T$ is not regular. However, from Ando's theorem \cite{Ando1963}, any contractive representation on $\mathbb{Z}_+^2$ has a unitary dilation and thus is completely definite definite. 
\end{example}

Isometric Nica-covariant representations on quasi-lattice ordered groups were first introduced by Nica \cite{Nica1992}: an isometric representation $W:G\to\bh{H}$ is Nica-covariant if for any $x,y$ with an upper bound, $W_x W_x^* W_y W_y^*=W_{x\vee y} W_{x\vee y}^*$. When the order is a lattice order, it is equivalent to the property that $W_s,W_t^*$ commute whenever $s\wedge t=e$. Therefore, the notion of Nica-covariant is extended to abelian lattice ordered groups in \cite{DFK2014}, and we shall further extend such definition to non-abelian lattice ordered groups and call a representation $T:P\to\bh{H}$ \emph{Nica-covariant} if $T(s)T(t)^*=T(t)^*T(s)$ whenever $s\wedge t=e$. For a Nica-covariant representation $T$, since $T(g^+)$ commutes with $T(g^-)^*$ for any $g\in G$, there is no difference between left and right regularity. It observed in \cite{DFK2014} that Nica-covariant representations are regular in many cases. 

\begin{example} (Examples of Nica covariant representations)
\begin{enumerate}
\item On $(\mathbb{Z},\mathbb{Z}_+)$, a contractive representation $T$ on $\mathbb{Z}_+$ only depends on $T_1=T(1)$ since $T(n)=T_1^n$. This representation is always Nica-covariant since for any $s,t\geq 0$, $s\wedge t=0$ if and only if one of $s,t$ is $0$. A well known result due to Sz.Nagy shows that its extension to $\mathbb{Z}$ by $\tilde{T}(-n)=T^{*n}$ is completely positive definite and thus $T$ is regular.
\item Similarly, any contractive representation of a totally ordered group $(G,P)$ is Nica-covariant. A theorem of Mlak \cite{Mlak1966} shows that such representations are regular.
\item $(\mathbb{Z}^n,\mathbb{Z}_+^n)$, the finite Cartesian product of $(\mathbb{Z},\mathbb{Z}_+)$ is a lattice ordered group. A representation $T$ on $\mathbb{Z}_+^n$ depends on $n$ contractions $T_1=T(1,0,\cdots,0)$, $T_2=T(0,1,0,\cdots,0)$,$\cdots$, $T_n=T(0,\cdots,0,1)$. Notice $T$ is Nica covariant if and only if $T_i, T_j$ $*$-commute whenever $i\neq j$. Hence Nica covariant representations are equivalent to doubly commuting. It is known \cite[Section I.9]{SFBook} that doubly commuting contractive representations are regular. 
\item For a lattice ordered group made from a direct product of totally ordered groups, Fuller \cite{Fuller2013} showed that their contractive Nica-covariant representations are regular.
\end{enumerate}
\end{example}

A question posed in \cite[Question 2.5.11]{DFK2014} asks whether contractive Nica-covariant representations on abelian lattice ordered groups are regular in general. For example, for $G=C_\mathbb{R}[0,1]$ and $P$ equal to the set of non-negative continuous functions, there are no known results on whether contractive Nica-covariant representations are regular on such semigroup. Little is known for the non-abelian lattice ordered groups as well. In this paper, we establish that all Nica-covariant representations of lattice ordered semigroups are regular.

Let $(G,P)$ be a lattice-ordered group, not necessarily abelian. Recall that the regularity conditions require a matrix involving entries in the form of $\tilde{T}(pq^{-1})$ to be positive, where $p,q\in P$. We start by investigating this quantity of $pq^{-1}$.

\begin{lemma}\label{lm.1} Let $p,q\in P$. Then,
\begin{eqnarray*}
(pq^{-1})_+ &=& p(p\wedge q)^{-1}\mbox{  and,}\\
(pq^{-1})_- &=& q(p\wedge q)^{-1}.
\end{eqnarray*} \end{lemma}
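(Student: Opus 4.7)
The plan is to compute $(pq^{-1})_+$ and $(pq^{-1})_-$ directly from their definitions $g_+ = g \vee e$ and $g_- = g^{-1} \vee e$ (recorded just above Lemma \ref{lm.LatticeBasic}), using only the distributive and inversion identities from that same lemma. Since the paper works in possibly non-abelian lattice ordered groups, some care is required with the side on which multiplication is performed, but no deep trick is needed.

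For $(pq^{-1})_+$, I would rewrite $e = qq^{-1}$ so that
\[
(pq^{-1})_+ = pq^{-1} \vee e = pq^{-1} \vee qq^{-1},
\]
and then apply the right-distributive law $(b\vee c)a = (ba)\vee(ca)$ from Lemma \ref{lm.LatticeBasic}(1) with $a = q^{-1}$ to obtain $(p\vee q)q^{-1}$. At this point I would invoke Lemma \ref{lm.LatticeBasic}(4), which asserts $p \vee q = p(p\wedge q)^{-1} q$, to conclude
\[
(pq^{-1})_+ = p(p\wedge q)^{-1} q q^{-1} = p(p\wedge q)^{-1}.
\]

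For $(pq^{-1})_-$, I would use $(pq^{-1})^{-1} = qp^{-1}$ and run the identical computation on the other side: write $e = pp^{-1}$, distribute to get $(q\vee p)p^{-1}$, and apply Lemma \ref{lm.LatticeBasic}(4) again to rewrite $q \vee p = q(p\wedge q)^{-1} p$. Cancelling $pp^{-1}$ gives $q(p\wedge q)^{-1}$.

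As a sanity check I would briefly verify the two defining properties of $g_\pm$: that $(pq^{-1})_+ (pq^{-1})_-^{-1} = p(p\wedge q)^{-1}(p\wedge q)q^{-1} = pq^{-1}$, and that $(pq^{-1})_+ \wedge (pq^{-1})_- = (p\wedge q)(p\wedge q)^{-1} = e$ using right distributivity of $\wedge$. The main obstacle, such as it is, will simply be being deliberate about multiplying on the correct side so that Lemma \ref{lm.LatticeBasic}(1) and (4) apply as stated; beyond that, the proof is a two-line algebraic manipulation.
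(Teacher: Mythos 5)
Your proof is correct and is essentially the computation the paper gives: both start from $g_\pm = g^{\pm1}\vee e$ and apply the distributive law from Lemma \ref{lm.LatticeBasic}(1). The only (cosmetic) difference is that the paper factors $p$ out on the \emph{left}, writing $pq^{-1}\vee e = p(q^{-1}\vee p^{-1})$ and then using the inversion identity of Lemma \ref{lm.LatticeBasic}(2) to get $(p\wedge q)^{-1}$ directly, which avoids your detour through Lemma \ref{lm.LatticeBasic}(4); both routes are equally valid.
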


\begin{proof}

By property (1) and (2) in Lemma \ref{lm.LatticeBasic}, 
\begin{eqnarray*}
(pq^{-1})_+ &=& (pq^{-1} \vee e) \\
&=& p(q^{-1} \vee p^{-1}) \\
&=& p(p\wedge q)^{-1}.
\end{eqnarray*}
Similarly, $(pq^{-1})_-=q(p\wedge q)^{-1}.$   \end{proof}

\begin{lemma}\label{lm.1b} Let $p,q,g\in P$ such that $g\wedge q=e$. Then $(pg)\wedge q=p\wedge q$.
\end{lemma}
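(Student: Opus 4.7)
The plan is to prove the equality by establishing the two inequalities $p\wedge q\leq (pg)\wedge q$ and $(pg)\wedge q\leq p\wedge q$ separately.

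For the first inequality, I would exploit that the partial order is compatible with the group. Since $g\in P$ we have $e\leq g$, and left-multiplying by $p$ yields $p\leq pg$. Combined with $p\wedge q\leq p\leq pg$ and $p\wedge q\leq q$, the element $p\wedge q$ is a lower bound of both $pg$ and $q$, so by the definition of $\wedge$ we conclude $p\wedge q\leq (pg)\wedge q$.

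For the reverse inequality, the key tool is property (5) of Lemma \ref{lm.LatticeBasic}, which applies precisely because $p,q,g\in P$. Applying it with $a=q$, $b=p$, $c=g$ gives
\[
q\wedge(pg)\leq (q\wedge p)(q\wedge g).
\]
The hypothesis $g\wedge q=e$ collapses the second factor to the identity, so this reduces to $q\wedge(pg)\leq q\wedge p$. Since $\wedge$ is commutative, this is exactly $(pg)\wedge q\leq p\wedge q$.

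There is no real obstacle: the statement is essentially a direct consequence of Lemma \ref{lm.LatticeBasic}(5) together with the group compatibility of the order. The only point requiring a moment of care is recognizing that the submultiplicativity property (5) is the right tool and that $q\wedge g=e$ is the precise hypothesis needed to make the right-hand side collapse.
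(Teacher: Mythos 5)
Your proof is correct and follows essentially the same route as the paper: one direction via the observation that $p\wedge q$ is a lower bound of both $pg$ and $q$, and the other via Lemma \ref{lm.LatticeBasic}(5) combined with the hypothesis $g\wedge q=e$. No issues.
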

\begin{proof}

By the property (5) of Lemma \ref{lm.LatticeBasic}, we have that
$$(pg)\wedge q\leq (p\wedge q)(g\wedge q)=p\wedge q.$$
On the other hand, $p\wedge q$ is clearly a lower bound for both $p\leq pg$ and $q$, and hence $p\wedge q\leq (pg)\wedge q$. This proves the equality. \end{proof}

\begin{lemma}\label{lm.2} Let $p,q\in P$. If $g\in P$ is another element where $g\wedge q=0$, then 
\begin{eqnarray*}
(pgq^{-1})_- &=& (pq^{-1})_-\mbox{  and,} \\
(pgq^{-1})_+ &=& (pq^{-1})_+g. 
\end{eqnarray*}
In particular, if $0\leq g\leq p$, then
\begin{eqnarray*}
(pg^{-1}q^{-1})_- &=& (pq^{-1})_-\mbox{  and,} \\
(pg^{-1}q^{-1})_+ &=& (pq^{-1})_+g^{-1}.
\end{eqnarray*}
\end{lemma}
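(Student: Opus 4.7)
The plan is to deduce both sets of identities essentially as a corollary of Lemma \ref{lm.1} and Lemma \ref{lm.1b}, with the only nontrivial step being a short commutativity argument for the $+$ parts.

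I would first apply Lemma \ref{lm.1} to the pair $(pg, q) \in P \times P$ to express $(pg \cdot q^{-1})_-$ and $(pg \cdot q^{-1})_+$ in terms of $(pg) \wedge q$. The hypothesis $g \wedge q = e$ is precisely what Lemma \ref{lm.1b} needs to collapse $(pg) \wedge q$ down to $p \wedge q$. This immediately gives $(pgq^{-1})_- = q(p \wedge q)^{-1} = (pq^{-1})_-$ and $(pgq^{-1})_+ = pg(p \wedge q)^{-1}$, so the only thing left in the main statement is to rewrite the latter as $(pq^{-1})_+\, g = p(p \wedge q)^{-1} g$.

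The main (and really only) obstacle is this last step, since the group is not assumed to be abelian and $g$ cannot be moved past $(p \wedge q)^{-1}$ for free. I would handle it by observing that $p \wedge q \leq q$ forces $g \wedge (p \wedge q) \leq g \wedge q = e$, so $g$ and $p \wedge q$ are disjoint in $P$. Lemma \ref{lm.LatticeBasic}(4) then says that disjoint elements commute, so $g(p \wedge q) = (p \wedge q)g$, hence $g$ commutes with $(p \wedge q)^{-1}$, and the first half of the lemma is finished.

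For the \emph{in particular} statement, my plan is to reduce it to the first part by the substitution $p \mapsto pg^{-1}$. Since $g \leq p$ and the order is compatible with the group, $pg^{-1} \in P$, and the hypothesis $g \wedge q = e$ is unaffected by this change. Plugging $\tilde{p} = pg^{-1}$ into the two identities just established, the expressions $(\tilde{p}\, g\, q^{-1})_\pm$ collapse on the left to $(pq^{-1})_\pm$, and the resulting $+$ identity rearranges to exactly $(pg^{-1}q^{-1})_+ = (pq^{-1})_+\, g^{-1}$, with the $-$ identity becoming $(pg^{-1}q^{-1})_- = (pq^{-1})_-$.
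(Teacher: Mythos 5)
Your proposal is correct and follows essentially the same route as the paper: apply Lemma \ref{lm.1} to the pair $(pg,q)$, collapse $(pg)\wedge q$ to $p\wedge q$ via Lemma \ref{lm.1b}, commute $g$ past $(p\wedge q)^{-1}$ using Lemma \ref{lm.LatticeBasic}(4) since $g\wedge(p\wedge q)=e$, and obtain the \emph{in particular} statement by the substitution $p\mapsto pg^{-1}$. No gaps.
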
  
\begin{proof}

By Lemma \ref{lm.1}, we get  $(pgq^{-1})_+=pg(q\wedge pg)^{-1}$. Apply Lemma \ref{lm.1b} to get
$$(q\wedge pg)^{-1} = (q\wedge p)^{-1}.$$

Now $g\wedge (p\wedge q)=e$ and thus $g$ commutes with $p\wedge q$ by property (4) of Lemma \ref{lm.LatticeBasic}. Therefore,
\begin{eqnarray*}
(pgq^{-1})_+ &=& pg(q\wedge pg)^{-1} \\
&=& p(q\wedge p)^{-1}g  \\
&=& (pq^{-1})_+ g.
\end{eqnarray*}
The statement $(pgq^{-1})_- = (pq^{-1})_-g$ can be proven in a similar way. 

Finally, for the case where $0\leq g\leq p$, it follows immediately by considering $p'=pg^{-1}$ and thus $p=p'g$. \end{proof}

\begin{lemma}\label{lm.order} If $p_1,p_2,\cdots,p_n\in P$ and $g_1,\cdots,g_n\in P$ be such that $g_i\leq p_i$ for all $i=1,2,\cdots,n$. Then $\wedge_{i=1}^n p_i g_i^{-1} \leq \wedge_{i=1}^n p_i$. In particular, when $\wedge_{i=1}^n p_i=e$, we have $\wedge_{i=1}^n p_i g_i^{-1}=e$.
\end{lemma}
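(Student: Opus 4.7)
The plan is to reduce this lemma to the basic monotonicity of the meet: if $a_i\leq b_i$ for every $i$, then $\wedge_{i=1}^n a_i \leq \wedge_{i=1}^n b_i$, which is immediate from the defining property of the infimum. So the entire content of the first inequality is the pointwise estimate $p_i g_i^{-1} \leq p_i$. Since $g_i\in P$ we have $g_i\geq e$, and part (3) of Lemma \ref{lm.LatticeBasic} yields $g_i^{-1}\leq e$. Left-multiplying by $p_i$ and invoking compatibility of the order with the group operation gives $p_i g_i^{-1} \leq p_i e = p_i$, which is what we need.

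For the ``in particular'' statement, I would observe that the hypothesis $g_i \leq p_i$ can be rewritten (using compatibility, equivalently the normality $gPg^{-1}\subseteq P$ noted in the preliminaries) as $p_i g_i^{-1}\in P$, and hence $p_i g_i^{-1}\geq e$. Consequently $e$ is a common lower bound for the family $\{p_i g_i^{-1}\}$, so $\wedge_{i=1}^n p_i g_i^{-1} \geq e$. Combining this with the first part, under the hypothesis $\wedge_{i=1}^n p_i = e$, gives $e \leq \wedge_{i=1}^n p_i g_i^{-1} \leq e$, forcing equality.

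There is no real obstacle here; the only thing to be careful about is consistently using the compatibility of $\leq$ with both left and right multiplication (together with inversion-reversal) when translating $g_i\leq p_i$ into $p_i g_i^{-1}\in P$ and $g_i^{-1}\leq e$ into $p_i g_i^{-1}\leq p_i$. Everything is a direct application of Lemma \ref{lm.LatticeBasic}.
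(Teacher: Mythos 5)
Your proof is correct and follows essentially the same route as the paper's: both establish the pointwise bounds $e\leq p_i g_i^{-1}\leq p_i$ and then apply monotonicity of the meet. You simply spell out the justifications (compatibility/normality) that the paper compresses into ``it is clear that.''
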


\begin{proof} It is clear that $e\leq p_i g_i^{-1}\leq p_i$, and thus $$e\leq \wedge_{i=1}^n p_i g_i^{-1} \leq \wedge_{i=1}^n p_i.$$ Therefore, the equality holds when the later is $e$. \end{proof}

\section{A Necessary and Sufficient Condition For Regularity}

When $T:P\to\bh{H}$ is a representation of lattice ordered semigroup, we denote $\tilde{T}(g)=T(g^-)^* T(g^+)$. Recall that $T$ is \emph{regular} if $\tilde{T}$ is completely positive definite. The main result is the following necessary and sufficient condition for regularity:

\begin{theorem}\label{thm.main} Let $(G,P)$ be a lattice ordered group and $T:P\to\bh{H}$ be a contractive representation. Then $T$ is regular if and only if for any $p_1,\cdots,p_n\in P$ and $g\in P$ where $g\wedge p_i=e$ for all $i=1,2,\cdots,n$, we have
\begin{equation}\label{eq.right}\left[T(g)^* \tilde{T}(p_i p_j^{-1}) T(g)\right]\leq \left[\tilde{T}(p_i p_j^{-1})\right].\tag{$\star$}\end{equation}
\end{theorem}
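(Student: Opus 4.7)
The plan is to prove both implications from a common block-matrix identity. Fix $p_1,\ldots,p_n\in P$ and $g\in P$ with $g\wedge p_i=e$ for every $i$, and consider the $2n$-tuple $(p_1,\ldots,p_n, p_1 g,\ldots,p_n g)$ with moment matrix $M:=[\tilde{T}(q_iq_j^{-1})]_{1\le i,j\le 2n}$. Setting $A:=[\tilde{T}(p_ip_j^{-1})]$ and $X:=I_n\otimes T(g)$, I would verify the block form
$$M=\begin{pmatrix} A & X^*A \\ AX & A \end{pmatrix}.$$
The two diagonal blocks collapse to $A$ because $(p_ig)(p_jg)^{-1}=p_ip_j^{-1}$. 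For the off-diagonal blocks, Lemma \ref{lm.2} applied with the coprimality hypotheses $g\wedge p_j=e$ and $g\wedge p_i=e$ gives, respectively, $\tilde{T}(p_igp_j^{-1})=\tilde{T}(p_ip_j^{-1})T(g)$ and $\tilde{T}(p_ig^{-1}p_j^{-1})=T(g)^*\tilde{T}(p_ip_j^{-1})$ (the latter by passing to the inverse $p_jgp_i^{-1}$ before applying the lemma). Conjugating $M$ by the unipotent block matrix $\begin{pmatrix} I & 0 \\ -X & I \end{pmatrix}$ block-diagonalizes it to $\begin{pmatrix} A-X^*AX & 0 \\ 0 & A \end{pmatrix}$, so $M\ge 0$ if and only if both $A\ge 0$ and $A-X^*AX\ge 0$; the latter is precisely ($\star$).

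The necessity direction then falls out immediately: if $T$ is regular, $M\ge 0$ by definition, so ($\star$) must hold.

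For sufficiency, the plan is to run the same identity as an inductive doubling step. Assuming ($\star$) holds and the $n$-tuple moment matrix $A$ is already known to be positive, the identity delivers positivity of the doubled $2n$-tuple moment matrix $M$. Starting from the trivial base case $[\tilde{T}(e)]=I\ge 0$, this inductively certifies positivity for a family of doubled tuples, and positivity automatically passes to principal sub-tuples. The main obstacle is reducing an \emph{arbitrary} tuple $(q_1,\ldots,q_m)\in P^m$ to a tuple of this doubled shape; my plan is to use Lemma \ref{lm.1} to identify the relevant positive and negative parts of each $q_iq_j^{-1}$, Lemmas \ref{lm.1b} and \ref{lm.2} to factor out common elements cleanly, and Lemma \ref{lm.order} to guarantee that after each factorization step the residual pieces remain coprime to the factor being extracted. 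The delicate point is the non-abelian setting: factorizations $q_i=p_ig_i$ must be performed in a direction-consistent way, and one must lean on the normality $gPg^{-1}\subseteq P$ to ensure that the hypothesis $g\wedge p_i=e$ required by ($\star$) survives through every stage of the induction.
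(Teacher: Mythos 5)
Your necessity direction is correct and is essentially the paper's own argument (Lemma \ref{lm.equiv}): you arrive at the same block matrix, and your Schur-complement congruence by the unipotent matrix $\bigl(\begin{smallmatrix} I & 0 \\ -X & I \end{smallmatrix}\bigr)$ is a clean substitute for the paper's appeal to Lemma \ref{lm.Davidson} --- arguably cleaner, since it yields $M\geq 0$ if and only if $A\geq 0$ and $A-X^*AX\geq 0$ without having to assume $A\geq 0$ in advance.

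The sufficiency direction, however, has a genuine gap: everything after ``the main obstacle is reducing an arbitrary tuple to a tuple of this doubled shape'' is a statement of intent rather than an argument, and that reduction is where the theorem actually lives. Two things are missing. First, the doubling step only certifies tuples generated by pairwise coprime elements ($p_i\wedge p_j=e$ for $i\neq j$). For an arbitrary tuple one must factor out $g=\wedge_{j\in J}p_j$ from each subset $J$ with nontrivial meet, and the key mechanism (the paper's Lemma \ref{lm.reduce}) is that, setting $q_j=p_jg^{-1}$ for $j\in J$ and $q_j=p_j$ otherwise, the matrix $E^*[\tilde{T}(q_iq_j^{-1})]E$ with $E=\diag(I,\dots,I,T(g),\dots,T(g))$ agrees with $[\tilde{T}(p_ip_j^{-1})]$ everywhere \emph{except} on the $(J^c,J^c)$ corner, where it produces $[T(g)^*\tilde{T}(p_ip_j^{-1})T(g)]_{i,j\notin J}$; Condition (\ref{eq.right}) must then be invoked a second time --- applied to the sub-tuple $(p_i)_{i\notin J}$ and this $g$, which satisfies $g\wedge p_i=e$ for $i\notin J$ precisely because $J\cup\{i\}$ is a larger subset --- to replace that corner by the dominating $[\tilde{T}(p_ip_j^{-1})]_{i,j\notin J}$ while preserving positivity. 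The lemmas you cite handle the coprimality bookkeeping but do not supply this corner-replacement argument, which is the second and essential use of (\ref{eq.right}). Second, making the reduction terminate requires a double induction: an outer induction on $n$ (needed because the corner replacement presupposes positivity of the smaller matrix indexed by $J^c$) and an inner induction on the largest $m$ for which some $m$-element subset has nontrivial meet, with Lemma \ref{lm.order} guaranteeing that factoring out one subset's meet does not re-create nontrivial meets for larger subsets. None of this structure is set up in your proposal, so as written the sufficiency half is a plausible outline of the paper's strategy rather than a proof.
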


\begin{remark}\label{rm.pos} If we denote $$X=\left[\tilde{T}(p_i p_j^{-1})\right]$$ and $D=
\diag(T(g),T(g),\cdots,T(g))$, Condition (\ref{eq.right}) is equivalent of saying $D^* X D\leq X$. Notice that we made no assumption on $X\geq 0$. Indeed, it follows from the main result that Condition (\ref{eq.right}) is equivalent of saying the representation $T$ is regular, which in turn implies $X\geq 0$. Therefore, when checking the Condition (\ref{eq.right}), we may assume $X\geq 0$. 
\end{remark}

\begin{remark} By setting $p_1=e$ and picking any $g\in P$, Condition (\ref{eq.right}) implies that $T(g)^* T(g)\leq I$, and thus $T$ must be contractive. 
\end{remark}

The following Lemma is taken from \cite[Lemma 14.13]{NestAlgebra}.

\begin{lemma}\label{lm.Davidson} If $A,X,D$ are operators in $\bh{H}$ where $A\geq 0$. Then a matrix of the form $\begin{bmatrix} A & A^{1/2}X \\ X^* A^{1/2} & D\end{bmatrix}$ is positive if and only if $D\geq X^* X$.
\end{lemma}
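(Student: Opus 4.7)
The plan is to verify the two directions separately: sufficiency by exhibiting an explicit square-root factorization, and necessity by a congruence transformation that block-diagonalizes the matrix.

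For the $(\Leftarrow)$ direction, assume $D \geq X^*X$. Then $D - X^*X$ is a positive operator and admits a positive square root $(D-X^*X)^{1/2}$. The plan is to set
\[
M = \begin{bmatrix} A^{1/2} & X \\ 0 & (D-X^*X)^{1/2} \end{bmatrix}
\]
and compute $M^*M$ entry by entry: the $(1,1)$ entry is $A^{1/2}A^{1/2} = A$, the $(1,2)$ entry is $A^{1/2}X$, and the $(2,2)$ entry is $X^*X + (D - X^*X) = D$. Hence the block matrix in the statement equals $M^*M$, which is automatically positive.

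For the $(\Rightarrow)$ direction, the cleanest approach is to apply a congruence that block-diagonalizes the matrix. Formally, setting $L = \begin{bmatrix} I & 0 \\ -X^* A^{-1/2} & I \end{bmatrix}$, a direct computation shows
\[
L \begin{bmatrix} A & A^{1/2}X \\ X^*A^{1/2} & D \end{bmatrix} L^* = \begin{bmatrix} A & 0 \\ 0 & D - X^*X \end{bmatrix}.
\]
Since $L$ is invertible (being unit lower-triangular), this congruence preserves positivity, and positivity of the $(2,2)$ block on the right gives $D \geq X^*X$.

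The main obstacle is that $A^{-1/2}$ is not well-defined when $A$ is merely positive rather than strictly positive. To circumvent this I would work directly with the quadratic form: testing positivity on the vector $(h,k)$ yields $\|A^{1/2}h\|^2 + 2\operatorname{Re}\langle Xk, A^{1/2}h\rangle + \langle Dk,k\rangle \geq 0$, and completing the square in $A^{1/2}h$ gives
\[
\|A^{1/2}h + Xk\|^2 + \langle Dk,k\rangle \geq \|Xk\|^2
\]
for every $h, k \in \mathcal{H}$. Either by taking $A^{1/2}h$ to approximate $-Xk$ in the closure of the range of $A^{1/2}$, or by perturbing $A$ to $A + \varepsilon I$ (which makes the congruence argument above rigorous) and letting $\varepsilon \to 0^+$, the first term can be driven to zero, yielding $\langle Dk,k\rangle \geq \|Xk\|^2 = \langle X^*Xk, k\rangle$ for all $k$, i.e., $D \geq X^*X$.
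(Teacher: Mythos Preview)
The paper does not give its own proof of this lemma; it simply quotes it from \cite[Lemma~14.13]{NestAlgebra}. So there is nothing to compare against, and I will just comment on the correctness of your argument.

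Your $(\Leftarrow)$ direction is clean and correct: the factorization $M^*M$ works exactly as you wrote.

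Your $(\Rightarrow)$ direction, however, has a genuine gap, and in fact the forward implication is \emph{false} as literally stated for arbitrary $X$. Take $A=0$, $X=I$, $D=0$: then $A^{1/2}X=0$, the block matrix is the zero matrix (hence positive), but $D=0\not\ge I=X^*X$. Both of your proposed fixes break down on exactly this point. In the quadratic-form argument, you can only drive $\|A^{1/2}h+Xk\|$ down to $\|(I-P)Xk\|$, where $P$ is the projection onto $\overline{\operatorname{ran}A^{1/2}}$; this yields only $D\ge X^*PX$, not $D\ge X^*X$. In the $\varepsilon$-perturbation argument, if you replace only the $(1,1)$ block by $A+\varepsilon I$ you obtain after the congruence merely $D\ge X^*A^{1/2}(A+\varepsilon I)^{-1}A^{1/2}X\to X^*PX$; and if you also replace the off-diagonal block by $(A+\varepsilon I)^{1/2}X$, the perturbed matrix need no longer be positive (the counterexample above already shows this).

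The missing hypothesis is that $\operatorname{ran}X\subseteq\overline{\operatorname{ran}A^{1/2}}$ (equivalently $PX=X$); under that assumption your approximation argument goes through verbatim. This is harmless for the paper's purposes: in the only place the $(\Rightarrow)$ direction is used (Lemma~\ref{lm.equiv}), the lemma is applied with ``$A$'' $=X\ge 0$ and ``$X$'' $=X^{1/2}D$, so the range of ``$X$'' lies in $\operatorname{ran}X^{1/2}$ automatically. You should state and use this extra range condition explicitly.
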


Condition (\ref{eq.right}) can thus be interpreted in the following equivalent definition.

\begin{lemma}\label{lm.equiv} Let $p_1,\cdots,p_n\in P$ and $g\in P$ with $g\wedge p_i=e$ for all $1\leq i\leq n$. Denote $q_1=p_1g,\cdots,q_n=p_ng$ and $q_{n+1}=p_1,\cdots,q_{2n}=p_n$. Then Condition (\ref{eq.right}) is equivalent to $\left[\tilde{T}(q_i q_j^{-1})\right]\geq 0$.
\end{lemma}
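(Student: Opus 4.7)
The plan is to compute the $2n\times 2n$ matrix $M:=\big[\tilde{T}(q_iq_j^{-1})\big]$ explicitly, identify it as a $2\times 2$ block matrix of the form $\begin{pmatrix} X & XD \\ D^*X & X \end{pmatrix}$ with $X:=\big[\tilde{T}(p_ip_j^{-1})\big]$ and $D:=\diag(T(g),\dots,T(g))$, and then invoke Lemma~\ref{lm.Davidson}.

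First I would unravel the four $n\times n$ blocks. Since $(p_ig)(p_jg)^{-1}=p_ip_j^{-1}$, both the $(1,1)$-block and the $(2,2)$-block equal $X$. The $(1,2)$-block has entries $\tilde{T}(p_igp_j^{-1})$; using the hypothesis $g\wedge p_j=e$, Lemma~\ref{lm.2} (applied with $p=p_i$ and $q=p_j$) gives $(p_igp_j^{-1})_+=(p_ip_j^{-1})_+g$ and $(p_igp_j^{-1})_-=(p_ip_j^{-1})_-$, so multiplicativity of $T$ on $P$ collapses this entry to $\tilde{T}(p_ip_j^{-1})T(g)$. Hence the $(1,2)$-block is $XD$, and taking adjoints through the identity $\tilde{T}(h^{-1})=\tilde{T}(h)^*$ identifies the $(2,1)$-block as $D^*X$.

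Once the block decomposition is in hand, the equivalence follows from Lemma~\ref{lm.Davidson}. For the direction $M\geq 0\Rightarrow(\star)$, positivity of the top-left block yields $X\geq 0$ for free, and with the factorization $XD=X^{1/2}(X^{1/2}D)$ Lemma~\ref{lm.Davidson} converts $M\geq 0$ into $X\geq (X^{1/2}D)^*(X^{1/2}D)=D^*XD$, which is precisely Condition~(\ref{eq.right}). For the reverse direction, by Remark~\ref{rm.pos} we may assume $X\geq 0$ while verifying Condition~(\ref{eq.right}); the same application of Lemma~\ref{lm.Davidson} then lifts $(\star)$ back to $M\geq 0$.

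No serious obstacle is expected beyond bookkeeping. The delicate point is the block computation itself: one must invoke $g\wedge p_j=e$ for every column index $j$ in order for Lemma~\ref{lm.2} to strip off the $g$ as a right-factor $T(g)$, and in the non-abelian setting one must keep track that $T(g)$ sits on the right of $\tilde{T}(p_ip_j^{-1})$ in the $(1,2)$-block (and on the left in the $(2,1)$-block) rather than commuting freely through $X$.
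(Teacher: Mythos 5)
Your proposal is correct and follows essentially the same route as the paper: the same block identification $\bigl[\tilde{T}(q_iq_j^{-1})\bigr]=\begin{pmatrix} X & XD \\ D^*X & X\end{pmatrix}$ via Lemma \ref{lm.2}, followed by Lemma \ref{lm.Davidson} with the factorization $XD=X^{1/2}(X^{1/2}D)$. Your explicit handling of where $X\geq 0$ comes from in each direction is slightly more careful than the paper's one-line appeal, but it is the same argument.
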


\begin{proof}

Let $X=[\tilde{T}(p_i p_j^{-1})]\geq 0$ and $D=\diag(T(g),T(g),\cdots,T(g))$. Notice by Lemma \ref{lm.2} that 
\begin{eqnarray*}
(p_i g p_j^{-1})_+&=&(p_i p_j^{-1})_+ g\\
(p_i g p_j^{-1})_-&=&(p_i p_j^{-1})_-,
\end{eqnarray*}
and thus $\tilde{T}(p_i g p_j^{-1})=\tilde{T}(p_i p_j^{-1}) T(g)$. Therefore,
$$\left[\tilde{T}(q_i q_j^{-1})\right]=\begin{bmatrix} X & XD \\ D^* X & X\end{bmatrix}.$$ Lemma \ref{lm.Davidson} implies that this matrix is positive if and only if $D^* X D\leq X$, which is Condition \ref{eq.right}.\end{proof}

We shall first show that $\left[\tilde{T}(p_i p_j^{-1})\right]\geq 0$ given $p_i\wedge p_j=e$ and Condition (\ref{eq.right}). This will serve as a base case in the proof of the main result.

\begin{lemma}\label{lm.base} Let $(G,P)$ be a lattice ordered group, and $T$ be a representation on $P$ that satisfies Condition (\ref{eq.right}). If $p_i\wedge p_j=e$ for all $i\neq j$, then $[\tilde{T}(p_i p_j^{-1})]\geq 0$. 
\end{lemma}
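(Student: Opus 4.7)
The plan is to prove by induction on $n$ a slightly strengthened claim: for every pairwise meet-trivial tuple $p_1, \dots, p_n \in P$, the $(n+1) \times (n+1)$ matrix $[\tilde{T}(r_i r_j^{-1})]$ indexed by $r_0 = e, r_1 = p_1, \dots, r_n = p_n$ is positive. Lemma~\ref{lm.base} will then be immediate by restricting to the principal submatrix indexed by $\{p_1, \dots, p_n\}$, since principal submatrices of positive operator matrices are positive. The base case $n = 0$ is trivial, as the matrix is just $[\tilde{T}(e)] = [I]$.

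For the inductive step, I will apply Lemma~\ref{lm.equiv} with base list $(e, p_1, \dots, p_{n-1})$ and $g = p_n$. The meet hypotheses $g \wedge e = e$ and $g \wedge p_i = e$ for $i < n$ hold trivially and by pairwise coprimality. The precondition of Lemma~\ref{lm.equiv} that the base matrix $[\tilde{T}(r_i r_j^{-1})]$ is positive is exactly the inductive hypothesis. Combining this with the assumed Condition~(\ref{eq.right}), Lemma~\ref{lm.equiv} yields positivity of the $2n \times 2n$ matrix $[\tilde{T}(q_i q_j^{-1})]$ whose index set is $(p_n, p_1 p_n, \dots, p_{n-1} p_n, e, p_1, \dots, p_{n-1})$. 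Taking the principal submatrix at the $(n+1)$ indices corresponding to $\{p_n, e, p_1, \dots, p_{n-1}\}$ gives positivity of the matrix for $\{e, p_1, \dots, p_n\}$, closing the induction.

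The crucial point, and the reason for strengthening the statement by including $e$, is that placing $e$ in the base list produces the $q$-entry $e \cdot g = p_n$, so $p_n$ itself appears among the $q$'s. Without this, $p_n$ would only appear inside the shifted products $p_i p_n$, and no principal submatrix of the $2n \times 2n$ positive matrix would equal the desired matrix. Once this bookkeeping is set up, the argument is purely formal: it uses only Lemma~\ref{lm.equiv}, Condition~(\ref{eq.right}), and the elementary fact that principal submatrices of positive block matrices are positive. The main obstacle was spotting the right inductive invariant; the obvious one (without the extra $e$) stalls because the $q$-list produced by Lemma~\ref{lm.equiv} never contains $p_n$ in isolation.
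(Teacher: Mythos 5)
Your proof is correct and takes essentially the same approach as the paper: both arguments induct by adjoining one $p_m$ at a time via Lemma~\ref{lm.equiv} with $g=p_m$, and both rely on seeding $e$ into the index list so that $p_m=e\cdot p_m$ appears among the doubled indices. The only difference is bookkeeping --- the paper keeps the full $2^n\times 2^n$ matrix of all subset products $q_k$ and extracts $[\tilde{T}(p_ip_j^{-1})]$ as a principal submatrix at the end, whereas you prune back to an $(n+1)\times(n+1)$ principal submatrix after each doubling step.
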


\begin{proof}

Let $q_1=e,q_2=p_1$ and for each $1<m\leq n$, recursively define $q_{2^{m-1}+k}=p_m q_k$ where $1\leq k\leq 2^{m-1}$. Since $T$ is contractive, 
$$[\tilde{T}(q_i q_j^{-1})]_{1\leq i,j\leq 2}=\begin{bmatrix} I & \tilde{T}(q_1 q_2^{-1}) \\ \tilde{T}(q_2q_1^{-1}) & I \end{bmatrix}\geq 0.$$
By Lemma \ref{lm.equiv}, for each $m$, $[\tilde{T}(q_i q_j^{-1})]_{1\leq i,j\leq 2^m}\geq 0$. Notice that $q_{2^{m-1}}=p_m$ for each $1\leq m\leq n$. Therefore, $[\tilde{T}(p_i p_j^{-1})]$ is a corner of $[\tilde{T}(q_i q_j^{-1})]\geq 0$, and thus must be positive.\end{proof}

For arbitrary choices of $p_1,\cdots,p_n\in P$, the goal is to reduce it to the case where $p_i\wedge p_j=e$. The following lemma does the reduction.

\begin{lemma}\label{lm.reduce} Let $(G,P)$ be a lattice ordered group. Assuming $T$ is a representation that satisfies Condition (\ref{eq.right}). 

Assume there exists $2\leq k<n$ where for each $J\subset\{1,2,\cdots,n\}$ with $|J|>k$, $\wedge_{j\in J} p_j=e$. Then let $g=\wedge_{j=1}^{k} p_j$ and $q_1=p_1 g^{-1},\cdots,q_k=p_k g^{-1}$, and $q_{k+1}=p_{k+1},\cdots,q_n=p_n$. 
Then $[\tilde{T}(p_i p_j^{-1})]\geq 0$ if $[\tilde{T}(q_i q_j^{-1})]\geq 0$.
\end{lemma}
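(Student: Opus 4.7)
The plan is to write $\tilde Y := [\tilde T(p_i p_j^{-1})]$ as the sum of two positive operator matrices built from $Y := [\tilde T(q_i q_j^{-1})]$: a compression $F^* Y F$ by a diagonal operator matrix, plus a correction supported on the lower right $(n-k) \times (n-k)$ block, where Condition (\ref{eq.right}) is precisely what makes the correction positive.

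First I would record two preliminaries. Since $g = \wedge_{j=1}^k p_j \le p_i$ for $i \le k$, each $q_i$ lies in $P$. Moreover, for any $i > k$ the set $\{1, \dots, k, i\}$ has $k+1 > k$ elements, so the hypothesis on subsets of size $> k$ gives $g \wedge p_i = \wedge_{j=1}^k p_j \wedge p_i = e$. These are the meet conditions needed below.

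Next I would analyze $\tilde T(p_i p_j^{-1})$ in four cases. When $i, j \le k$ or $i, j > k$, one has $p_i p_j^{-1} = q_i q_j^{-1}$ outright. When $i \le k < j$, $p_i p_j^{-1} = q_i g q_j^{-1}$, and since $g \wedge q_j = g \wedge p_j = e$, Lemma \ref{lm.2} yields $(q_i g q_j^{-1})_+ = (q_i q_j^{-1})_+ g$ and $(q_i g q_j^{-1})_- = (q_i q_j^{-1})_-$, hence $\tilde T(p_i p_j^{-1}) = \tilde T(q_i q_j^{-1}) T(g)$. The case $j \le k < i$ follows by taking adjoints, using $\tilde T(x^{-1}) = \tilde T(x)^*$. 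Writing $Y = \begin{bmatrix} A & B \\ B^* & C \end{bmatrix}$ in block form along the split at $k$, and setting $F = \diag(I, \dots, I, T(g), \dots, T(g))$ with $k$ copies of $I$ and $n-k$ copies of $T(g)$, these identities assemble into
\[
\tilde Y = F^* Y F + \begin{bmatrix} 0 & 0 \\ 0 & C - D^* C D \end{bmatrix},
\]
where $D = \diag(T(g), \dots, T(g))$ of size $n - k$.

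To conclude, the first summand is positive since $Y \ge 0$ by assumption. For the second, $C = [\tilde T(p_i p_j^{-1})]_{k < i, j \le n}$, and Condition (\ref{eq.right}) applied to the tuple $(p_{k+1}, \dots, p_n)$ and $g$ --- valid because $g \wedge p_i = e$ for $i > k$ --- yields exactly $D^* C D \le C$. The main obstacle, and the reason Condition (\ref{eq.right}) is the natural hypothesis, is that the compression $F^* Y F$ twists the lower right block from $C$ into $D^* C D$ rather than preserving it; the correction $C - D^* C D$ is precisely the operator whose positivity Condition (\ref{eq.right}) guarantees.
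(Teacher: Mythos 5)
Your proof is correct and follows essentially the same route as the paper: both verify that the off-diagonal blocks of $[\tilde T(q_iq_j^{-1})]$ relate to those of $[\tilde T(p_ip_j^{-1})]$ via Lemma \ref{lm.2}, compress by the diagonal $\diag(I,\dots,I,T(g),\dots,T(g))$, and use Condition (\ref{eq.right}) on the tuple $(p_{k+1},\dots,p_n)$ to repair the lower right block. Your explicit decomposition $\tilde Y = F^*YF + \operatorname{diag}(0,\,C-D^*CD)$ is just a cleaner way of writing the paper's "replace the lower right corner" step.
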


\begin{proof}

Let us denote $X=[\tilde{T}(q_jq_i^{-1})]\geq 0$ and its lower right $(n-k)\times (n-k)$ corner to be $Y$. Notice first of all, when $i,j\in\{1,2,\cdots,k\}$, $$q_i q_{j}^{-1}=p_i g^{-1} g p_{j}^{-1}=p_ip_{j}^{-1}.$$ 
So the upper left $k\times k$ corner of $[\tilde{T}(q_i q_j^{-1})]$ and the lower right $(n-k)\times(n-k)$ corner of $X$ are both the same as those in $[\tilde{T}(p_i p_j^{-1})]$. 

Now consider $i\in\{1,2,\cdots,k\}$ and $j\in\{k+1,\cdots,n\}$. It follows from the assumption that  $g\wedge p_j = \left(\wedge_{s=1}^k p_s\right)\wedge p_j=e$ and $g\leq p_i$. Therefore, we can apply Lemma \ref{lm.2} to get 
\begin{eqnarray*}
(p_i g^{-1}p_j^{-1})_- &=& (p_ip_j^{-1})_- \\
(p_i g^{-1}p_j^{-1})_+ &=& (p_ip_j^{-1})_+g^{-1}.
\end{eqnarray*}
Now $g\in P$, so that $T((q_i q_j^{-1})_+)T(g)=T((p_i p_j^{-1})_+)$ and $T((q_i q_j^{-1})_-)=T((p_i p_j^{-1})_-)$. Hence, $$\tilde{T}(q_i q_j^{-1}) T(g)=\tilde{T}(p_i p_j^{-1}).$$
Similarly, for $i\in\{k+1,\cdots,n\}$, $j\in\{1,2,\cdots,k\}$, we have $\tilde{T}(p_i p_j^{-1})=T(g)^*\tilde{T}(q_j q_i^{-1})$. Now define $D=\operatorname{diag}(I,\cdots, I, T(g), \cdots, T(g))$ be the block diagonal matrix with $k$ copies of $I$ followed by $n-k$ copies of $T(g)$. Consider $D X D^*$: it follows immediately from the assumption that $D^* X D\geq 0$. We have,
$$D^*[\tilde{T}(q_i q_j^{-1})] D= \renewcommand\arraystretch{1.4}\left[ \begin{array}{ccc|c}
\cdots & \cdots & \cdots & \vdots \\
\cdots & \tilde{T}(p_i p_j^{-1}) & \cdots & \tilde{T}(q_i q_j^{-1}) T(g) \\
\cdots & \cdots & \cdots & \vdots \\ \hline
\cdots & T(g)^* \tilde{T}(q_i q_j^{-1}) & \cdots & [T(g)^* \tilde{T}(p_i p_j^{-1}) T(g)] \\ \end{array} \right]\geq 0.$$
It follows from our previous computation that each entry in the lower left $(n-k)\times k$ corner and upper right $k\times (n-k)$ corner are the same as those in $[\tilde{T}(p_i p_j^{-1})]$. Hence, $D X D^*$ only differs from $[\tilde{T}(p_ip_j^{-1})]$ on the lower right $(n-k)\times (n-k)$ corner. It follows from Condition (\ref{eq.right}) that 
$$[T(g)^* \tilde{T}(p_i p_j^{-1}) T(g)]\leq [\tilde{T}(p_i p_j^{-1})].$$
Hence, the matrix remains positive when the lower right corner $[T(g)^* \tilde{T}(p_i p_j^{-1}) T(g)]$ in $D^* X D$ is replaced by $[\tilde{T}(p_i p_{j}^{-1})]$. The resulting matrix is exactly $[\tilde{T}(p_ip_j^{-1})]$, which must be positive.
\end{proof}

Now the main result (Theorem \ref{thm.main}) can be deduced inductively:

\begin{proof}

First assume that $T:P\to\bh{H}$ is a representation that satisfies Condition (\ref{eq.right}), which has to be contractive. The goal is to show for any $n$ elements $p_1,p_2,\cdots,p_n\in P$, the operator matrix $[\tilde{T}(p_i p_j^{-1})]\geq 0$ and thus $T$ is regular. We proceed by induction on $n$.

For $n=1$, $\tilde{T}(p_1 p_1^{-1})=I\geq 0$.

For $n=2$, we have,
$$[\tilde{T}(p_i p_j^{-1})]= \left[ \begin{array}{cc}
I & \tilde{T}(p_1 p_2^{-1}) \\
\tilde{T}(p_2 p_1^{-1}) & I \end{array} \right].$$
Here, $\tilde{T}(p_2 p_1^{-1}) =  \tilde{T}(p_1 p_2^{-1})^*$, and they are contractions since $T$ is contractive. Therefore, this $2\times 2$ operator matrix is positive.

Now assume that there is an $N$ such that for any $n<N$, we have $[\tilde{T}(p_i p_j^{-1})]\geq 0$ for any $p_1,p_2,\cdots,p_n\in P$. Consider the case when $n=N$:

For arbitrary choices $p_1,\cdots,p_N\in P$, let $g=\wedge_{i=1}^N p_i$, and replace $p_i$ by $p_i g^{-1}$. By doing so, $ p_i g^{-1} \left(p_j g^{-1}\right)^{-1}=p_i p_j^{-1}$, and thus they give the same matrix $[\tilde{T}(p_i p_j^{-1})]$. Moreover, $\wedge_{i=1}^n p_ig^{-1}=(\wedge_{i=1}^N p_i)g^{-1}=e$. Hence, without loss of generality, we may assume $\wedge_{i=1}^N p_i=e$.

Let $m$ be the smallest integer such that for all $J\subseteq \{1,2,\cdots, N\}$ and $|J|>m$, we have $\wedge_{j\in J} p_j=e$. It is clear that $m\leq N-1$. Now do induction on $m$:

For the base case when $m=1$, we have $p_i\wedge p_j=e$ for all $i\neq j$. Lemma \ref{lm.base} tells that Condition (\ref{eq.right}) implies $[\tilde{T}(p_i p_j^{-1})]\geq 0$.

Now assume $[\tilde{T}(p_i p_j^{-1})]\geq 0$ whenever $m\leq M-1<N-1$ and consider the case when $m=M$: For a subset $J\subseteq \{1,2,\cdots,n\}$ with $|J|=M$, let $g=\wedge_{j\in J} p_j$ and set $q_j=p_j g^{-1}$ for all $j\in J$, and $q_j=p_j$ otherwise. Lemma \ref{lm.reduce} concluded that $[\tilde{T}(p_i p_j^{-1})]\geq 0$ whenever $[\tilde{T}(q_i q_j^{-1})]\geq 0$ and the sub-matrix $[\tilde{T}(p_i p_j^{-1})]_{i,j\notin J}\geq 0$.

Since $|\{1,2,\cdots,N\}\backslash J|=N-M<N$, the induction hypothesis on $n$ implies that $[\tilde{T}(p_i p_j^{-1})]_{i,j\notin J}\geq 0$. Therefore, $[\tilde{T}(p_ip_j^{-1})]\geq 0$ whenever $[\tilde{T}(q_i q_j^{-1})]\geq 0$, and by dropping from $p_i$ to $q_i$, we may, without loss of generality, assume that $\wedge_{j\in J} p_j=e$. Repeat this process for all subsets $J\subset\{1,2,\cdots,n\}$ where $|J|=M$, and with Lemma \ref{lm.order}, we eventually reach a state when $\wedge_{j\in J} p_j=e$ for all $J\subseteq \{1,2,\cdots,N\}$, $|J|=M$. But in such case, for all $|J|\geq M$, we have $\wedge_{j\in J} p_j=e$. Therefore, we are in a situation where $m\leq M-1$. The result follows from the induction hypothesis on $m$.

Conversely, suppose that $T$ is regular. Fix $g\in P$ and $p_1,p_2,\cdots,p_k\in P$ where $g\wedge p_i=e$ for all $i=1,2,\cdots,k$. Denote $q_1=p_1g,q_2=p_2g,\cdots,q_k=p_kg$, and $q_{k+1}=p_1,q_{k+2}=p_2,\cdots,q_{2k}=p_k$. It follows from regularity that $[\tilde{T}(q_iq_j^{-1})]\geq 0$, which is equivalent to Condition (\ref{eq.right}) by Lemma \ref{lm.equiv}. \end{proof}

As an immediate consequence of Theorem \ref{thm.main}, we can show that isometric representations on any lattice ordered group must be regular.

\begin{corollary}\label{cor.iso} Let $T:P\to\bh{H}$ be an isometric representation of a lattice ordered semigroup. Then $T$ is regular. 
\end{corollary}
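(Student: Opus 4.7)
The plan is to invoke Theorem~\ref{thm.main}: it suffices to verify that condition~$(\star)$ holds for every $p_1,\ldots,p_n\in P$ and $g\in P$ with $g\wedge p_i=e$. For isometric $T$ I expect each entry of the left-hand side of $(\star)$ to equal the corresponding entry on the right, making the inequality immediate.

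First, I will use Lemma~\ref{lm.1} to factor each matrix entry as $\tilde T(p_ip_j^{-1})=T(b_{ij})^*T(a_{ij})$, where $a_{ij}=p_i(p_i\wedge p_j)^{-1}$ and $b_{ij}=p_j(p_i\wedge p_j)^{-1}$, both in $P$. Since $a_{ij}\leq p_i$ and $b_{ij}\leq p_j$, the hypothesis $g\wedge p_i=e$ forces $g\wedge a_{ij}=g\wedge b_{ij}=e$, and by Lemma~\ref{lm.LatticeBasic}(4) this upgrades to \emph{semigroup-level} commutations $a_{ij}g=ga_{ij}$ and $b_{ij}g=gb_{ij}$. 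Passing through the homomorphism $T$ and applying the isometric identity $T(g)^*T(g)=I$ at the very end, I compute
\[T(g)^*\tilde T(p_ip_j^{-1})T(g)=T(b_{ij}g)^*T(a_{ij}g)=T(gb_{ij})^*T(ga_{ij})=T(b_{ij})^*T(g)^*T(g)T(a_{ij})=\tilde T(p_ip_j^{-1}).\]
Therefore $[T(g)^*\tilde T(p_ip_j^{-1})T(g)]=[\tilde T(p_ip_j^{-1})]$ entry by entry, condition~$(\star)$ is trivially satisfied, and Theorem~\ref{thm.main} gives that $T$ is regular.

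The only subtle point is that the semigroup commutation $a_{ij}g=ga_{ij}$ yields operator commutation $T(a_{ij})T(g)=T(g)T(a_{ij})$ but does \emph{not} automatically give $T(a_{ij})T(g)^*=T(g)^*T(a_{ij})$. The correct move is to perform the swap inside $P$ first---rewriting $b_{ij}g$ as $gb_{ij}$ and $a_{ij}g$ as $ga_{ij}$---before expanding via the homomorphism property, so that a $T(g)^*T(g)=I$ appears and collapses the expression.
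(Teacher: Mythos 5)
Your proof is correct and follows essentially the same route as the paper's: verify Condition~(\ref{eq.right}) by showing $g\wedge(p_ip_j^{-1})_\pm=e$, commuting $T(g)$ past the factors of $\tilde T(p_ip_j^{-1})$, and cancelling $T(g)^*T(g)=I$. Your extra care in performing the commutation at the semigroup level before applying $T$ (so that only adjoints of genuine operator identities are used) is a valid and slightly more careful writing of the same argument.
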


\begin{proof}

Take $p_1,\cdots,p_n\in P$ and $g\in P$ with $g\wedge p_i=e$. It is clear that $g\wedge\left(p_i p_j^{-1}\right)_\pm =e$ and therefore $g$ commutes with each $\left(p_i p_j^{-1}\right)_\pm$. Hence,
\begin{eqnarray*}
T(g)^* \tilde{T}(p_ip_j^{-1}) T(g) &=& T(g)^* T((p_ip_j^{-1})_-)^* T((p_ip_j^{-1})_+) T(g) \\
&=& T((p_ip_j^{-1})_-)^*T(g)^*  T(g) T((p_ip_j^{-1})_+) \\
&=& T((p_ip_j^{-1})_-)^* T((p_ip_j^{-1})_+) =\tilde{T}(p_ip_j^{-1}).
\end{eqnarray*}
Therefore, $[T(g)^* \tilde{T}(p_ip_j^{-1}) T(g)]=[\tilde{T}(p_ip_j^{-1})]$ and Condition (\ref{eq.right}) is satisfied. \end{proof}

For a contractive representation $T$, it would suffice to dilate it to an isometric representation. This provides an analog of \cite[Proposition 2.5.4]{DFK2014} on non-abelian lattice ordered groups.

\begin{corollary} Let $T:P\to\bh{H}$ be a contractive representation. Then $T$ is completely positive definite if and only if there exists an isometric representation $V:P\to\bh{K}$ such that $P_\mathcal{H} V(p)\big|_\mathcal{H}=T(p)$ for all $p\in P$. Such $V$ can be taken to be minimal in the sense that $\mathcal{K}=\bigvee_{p\in P} V(p)\mathcal{H}$. 

In particular, $T$ is regular if and only if there exists such isometric dilation $V$ and in addition, $P_\mathcal{H} V(p)^* V(q)\big|_\mathcal{H}=T(p)^* T(q)$ for all $p,q\in P$ with $p\wedge q=e$.
\end{corollary}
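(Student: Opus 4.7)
The plan is to combine Theorem \ref{thm.NagyFoias} at the group level with Corollary \ref{cor.iso} (that isometric representations are automatically regular), using compressions to pass between a unitary dilation of $G$ and an isometric dilation of $P$.

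For the forward direction of the first equivalence, I would take a CPD extension $\tilde T:G\to\bh{H}$ of $T$, apply Theorem \ref{thm.NagyFoias} to obtain a minimal unitary dilation $U:G\to\bh{L}$, and set $\mathcal{K}=\bigvee_{p\in P}U(p)\mathcal{H}$. The identity $U(p)U(q)\mathcal{H}=U(pq)\mathcal{H}$ with $pq\in P$ shows $\mathcal{K}$ is invariant under $U(p)$ for each $p\in P$, so $V(p):=U(p)|_\mathcal{K}$ is the desired minimal isometric dilation. For the converse, given any isometric dilation $V:P\to\bh{K}$, Corollary \ref{cor.iso} says $V$ is regular and hence completely positive definite, so $V$ extends to a CPD map $\tilde V:G\to\bh{K}$; then the compression $\tilde T(g):=P_\mathcal{H}\tilde V(g)|_\mathcal{H}$ is CPD and agrees with $T$ on $P$.

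For the ``in particular'' part I would reuse these constructions. In the forward direction, assuming $T$ is regular, build $V$ as above from a unitary dilation $U$ of $\tilde T(g)=T(g_-)^*T(g_+)$. When $p\wedge q=e$, Lemma \ref{lm.LatticeBasic}(4) gives $pq=qp$, so $U(p)$ and $U(q)$ commute and in particular $U(p)^*U(q)=U(qp^{-1})$. Using $V(p)^*=P_\mathcal{K} U(p)^*|_\mathcal{K}$ together with $\mathcal{H}\subseteq\mathcal{K}$, the compression collapses to
\[P_\mathcal{H} V(p)^*V(q)|_\mathcal{H}=P_\mathcal{H} U(qp^{-1})|_\mathcal{H}=\tilde T(qp^{-1})=T(p)^*T(q),\]
where the last equality uses Lemma \ref{lm.1} to identify $(qp^{-1})_+=q$ and $(qp^{-1})_-=p$. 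In the converse direction, given $V$ isometric with the stated compression identity, Corollary \ref{cor.iso} yields $\tilde V(g)=V(g_-)^*V(g_+)$ as a CPD extension on $G$, and applying the hypothesis to $p=g_-$, $q=g_+$ shows $P_\mathcal{H}\tilde V(g)|_\mathcal{H}=T(g_-)^*T(g_+)$, so the natural extension of $T$ is CPD and $T$ is regular.

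The main obstacle I anticipate is the identification $P_\mathcal{H} V(p)^*V(q)|_\mathcal{H}=\tilde T(qp^{-1})$ in the second part: it simultaneously depends on the commutativity $U(p)U(q)=U(q)U(p)$ extracted from Lemma \ref{lm.LatticeBasic}(4), and on the subtle point that $V(p)^*$ is the adjoint computed on $\mathcal{K}$ rather than on $\mathcal{L}$, so it a priori carries a projection $P_\mathcal{K}$. That projection gets absorbed only because $\mathcal{H}\subseteq\mathcal{K}$, which lets $U(p)^*U(q)$ be rewritten as the single element $U(qp^{-1})$ before compressing to $\mathcal{H}$.
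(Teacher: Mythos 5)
Your proposal is correct and takes essentially the same route as the paper's proof: both directions rest on Theorem \ref{thm.NagyFoias} together with Corollary \ref{cor.iso}, with $\mathcal{K}=\bigvee_{p\in P}U(p)\mathcal{H}$ giving the invariant subspace for the forward direction and the regularity of $V$ supplying the CPD extension for the converse. Your only (harmless) deviation is compressing the CPD extension $\tilde V$ of $V$ directly to $\mathcal{H}$ rather than first passing through a unitary dilation of $\tilde V$ as the paper does, and you correctly flag the one genuine subtlety, namely that $P_{\mathcal H}P_{\mathcal K}=P_{\mathcal H}$ is what lets $V(p)^*V(q)$ be replaced by $U(p)^*U(q)=U(qp^{-1})$ before compressing.
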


\begin{proof}

When $T:P\to\bh{H}$ is completely positive definite and its extension $S$ to $G$ has minimal unitary dilation $U:G\to\bh{L}$, let $\mathcal{K}=\bigvee_{p\in P} U(p)\mathcal{H}$. It is clear that $\mathcal{K}$ is invariant for any $U(p)$, $p\in P$. Define a map $V:P\to\bh{K}$ via $V(p)=P_{\mathcal{K}} U(p)\big|_{\mathcal{K}}$, which must be isometric due to the invariance of $\mathcal{K}$. $V$ is an isometric dilation of $T$ that satisfies $P_\mathcal{H} V(p)|_\mathcal{H} = T(p)$, and $\mathcal{K}=\bigvee_{p\in P} V(p)\mathcal{H}$. In other words, $V$ is a minimal isometric dilation of $T$. In particular, when $T$ is regular, for any $p,q\in P$ with $p\wedge q=e$
\begin{eqnarray*}
T(p)^* T(q) &=& P_\mathcal{H} U(p)^* U(q)\big|_{\mathcal{H}}\\
 &=& P_\mathcal{H} P_\mathcal{K} U(p)^* U(q)\big|_{\mathcal{K}}\big|_{\mathcal{H}} \\
&=& P_\mathcal{H} V(p)^* V(q)\big|_\mathcal{H}.
\end{eqnarray*}
Conversely, when $V:P\to\bh{K}$ is a minimal isometric dilation of $T$, Corollary \ref{cor.iso} implies that $V$ is regular and thus completely positive definite. There exists a unitary dilation $U:G\to\bh{L}$ where $P_\mathcal{K} U(p)\big|_{\mathcal{K}}=V(p)$. Therefore, 
\begin{eqnarray*}
P_\mathcal{H} U(p)\big|_{\mathcal{H}} &=& P_\mathcal{H}P_\mathcal{K} U(p)\big|_{\mathcal{H}} \\
&=& P_\mathcal{H} V(p) \big|_{\mathcal{H}} = T(p).
\end{eqnarray*}
Hence, $U$ is also a unitary dilation of $T$ and thus $T$ is completely positive definite. Moreover, when $P_\mathcal{H} V(p)^* V(q)\big|_\mathcal{H}=T(p)^* T(q)$ for all $p,q\in P$ with $p\wedge q=e$, by the regularity of $V$,
$$P_\mathcal{H} U(p)^* U(q)\big|_\mathcal{H}=P_\mathcal{H} P_\mathcal{K}U(p)^* U(q)\big|_\mathcal{K}\big|_\mathcal{H}=T(p)^* T(q).$$
Therefore, $\tilde{T}(g)=T(g_-)^* T(g_+)$ is completely positive definite and $T$ is regular.
\end{proof}

\section{Nica-covariant Representations}

In this section, we answer the question of whether contractive Nica-covariant representations are regular. It suffices to show contractive Nica-covariant representations on lattice ordered groups satisfy Condition (\ref{eq.right}). 

\begin{theorem}\label{thm.nc} A contractive Nica-covariant representation on a lattice ordered group is regular.
\end{theorem}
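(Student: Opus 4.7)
The plan is to apply Theorem \ref{thm.main}: it suffices to verify Condition (\ref{eq.right}) for a contractive Nica-covariant representation $T$. So fix $p_1,\ldots,p_n \in P$ and $g \in P$ with $g \wedge p_i = e$ for all $i$, set $X = [\tilde{T}(p_ip_j^{-1})]$, and by Remark \ref{rm.pos} assume $X \geq 0$. The first observation is that for each pair $i,j$, Lemma \ref{lm.1} gives $(p_ip_j^{-1})_+ = p_i(p_i \wedge p_j)^{-1} \leq p_i$ and $(p_ip_j^{-1})_- = p_j(p_i \wedge p_j)^{-1} \leq p_j$, so the hypothesis $g \wedge p_i = g \wedge p_j = e$ forces $g \wedge (p_ip_j^{-1})_+ = g \wedge (p_ip_j^{-1})_- = e$.

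The key step is to upgrade these meet-relations to operator commutations using Nica-covariance. For any $s \in P$ with $s \wedge g = e$, Nica-covariance yields $T(s)T(g)^* = T(g)^*T(s)$; moreover $s$ and $g$ commute as group elements by Lemma \ref{lm.LatticeBasic}(4), so $T(s)T(g) = T(sg) = T(gs) = T(g)T(s)$ as well. Applying this with $s = (p_ip_j^{-1})_\pm$ inside $\tilde{T}(p_ip_j^{-1}) = T((p_ip_j^{-1})_-)^* T((p_ip_j^{-1})_+)$ and rearranging, one obtains
\[
T(g)^*\, \tilde{T}(p_ip_j^{-1})\, T(g) \;=\; T(g)^*T(g)\cdot \tilde{T}(p_ip_j^{-1}) \;=\; \tilde{T}(p_ip_j^{-1})\cdot T(g)^*T(g).
\]

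Writing $A = T(g)^*T(g)$ (a positive contraction, since $T$ is contractive) and $M = \diag(A,\ldots,A)$, the identity above reads $D^*XD = MX = XM$ where $D = \diag(T(g),\ldots,T(g))$. Thus $M$ commutes with $X$ in $M_n(\bh{H})$, so $(I-M)^{1/2}$ does as well by functional calculus, and
\[
X - D^*XD \;=\; (I-M)X \;=\; (I-M)^{1/2}\, X\, (I-M)^{1/2} \;\geq\; 0
\]
because $X \geq 0$. This is Condition (\ref{eq.right}), so $T$ is regular. The only nonroutine ingredient is the commutation step in the second paragraph; once it is in hand the rest is a direct positivity argument in $M_n(\bh{H})$. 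The main thing to be careful about is that the $\pm$-parts may fall outside the original family $\{p_i\}$, which is why the reduction $g \wedge (p_ip_j^{-1})_\pm = e$ (via Lemma \ref{lm.1}) is needed before Nica-covariance can be invoked.
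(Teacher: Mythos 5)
Your proof is correct and takes essentially the same approach as the paper: both use the meet conditions $g\wedge(p_ip_j^{-1})_\pm=e$ together with Nica-covariance to show that $T(g)$ and $T(g)^*$ commute with every entry of $X$, and then finish with a functional-calculus positivity argument. The only cosmetic difference is that the paper commutes $D$ past $X^{1/2}$ to get $D^*XD=X^{1/2}D^*DX^{1/2}\leq X$, whereas you factor through $(I-M)^{1/2}$; the two are interchangeable.
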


\begin{proof}

Let $p_1,\cdots,p_k\in P$ and $g\in P$ with $g\wedge p_i=e$ for all $i=1,2,\cdots,k$.  $X=[\tilde{T}(p_i p_j^{-1})]$ and $D=\operatorname{diag}(T(g),T(g),\cdots,T(g))$. By Remark \ref{rm.pos}, we may assume $X\geq 0$.

Since for each $p_i,p_j\in P$, $\tilde{T}(p_ip_j^{-1})=T(p_{i,j}^-)^* T(p_{i,j}^+)$ where $e\leq p_{i,j}^\pm\leq p_i,p_j$. Hence, $g\wedge p_{i,j}^\pm=e$ and thus $g$ commutes with $p_{i,j}^\pm$. Therefore $T(g)$ commutes with $T(p_{i,j}^+)$ because $T$ is a representation and it also commutes with $T(p_{i,j}^-)^*$ by the Nica-covariant condition. As a result, $T(g)$ commutes with each entry in $X$, and thus $D$ commutes with $X$. Similarly, $D^*$ commutes with $X$ as well.

By continuous functional calculus, since $X\geq 0$, we know $D,D^*$ also commutes with $X^{1/2}$. Hence, in such case, 
$$D^*XD=D^*X^{1/2}X^{1/2} D=X^{1/2} D^* D X^{1/2}\leq X. \qedhere$$ \end{proof}
It was shown in \cite[Proposition 2.5.10]{DFK2014} that a contractive Nica-covariant representation on abelian lattice ordered groups can be dilated to an isometric Nica-covariant representation. Here, we shall extend this result to non-abelian case.

\begin{corollary}\label{cor.NicaIso} Any minimal isometric dilation $V:P\to\bh{K}$ of a contractive Nica-covariant representation $T:P\to\bh{H}$ is also Nica-covariant.
\end{corollary}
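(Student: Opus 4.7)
The plan is to realize $V$ inside the minimal unitary dilation of $T$ and reduce Nica-covariance of $V$ to a projection identity that Nica-covariance of $T$ can verify. Theorem \ref{thm.nc} ensures $T$ is regular, so $\tilde T$ has a minimal unitary dilation $U:G\to\bh{L}$ with $\mathcal{H}\subseteq\mathcal{L}$. Setting $\mathcal{K}=\bigvee_{p\in P}U(p)\mathcal{H}$ and using $U(p)$-invariance of $\mathcal{K}$, the minimal isometric dilation is $V(p)=U(p)|_\mathcal{K}$, so $V(p)^*=P_\mathcal{K} U(p)^*|_\mathcal{K}$. For $s\wedge t=e$, Lemma \ref{lm.LatticeBasic}(4) makes $s,t$ commute in $G$, hence $U(s)U(t)^*=U(t)^*U(s)$ on $\mathcal{L}$; combining this with $P_\mathcal{K} U(s)P_\mathcal{K}=U(s)P_\mathcal{K}$ (which follows from $U(s)\mathcal{K}\subseteq\mathcal{K}$), a direct expansion gives
\[
V(s)V(t)^* - V(t)^*V(s) \;=\; -\,P_\mathcal{K}\,U(s)\,(I-P_\mathcal{K})\,U(t)^*\bigr|_\mathcal{K},
\]
so Nica-covariance of $V$ is equivalent to $U(s)(I-P_\mathcal{K})U(t)^*\mathcal{K}\subseteq\mathcal{K}^\perp$ whenever $s\wedge t=e$.

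Testing this vanishing against the spanning vectors $U(p)h_1$ and $U(q)h_2$ for $p,q\in P$ and $h_1,h_2\in\mathcal{H}$ translates it into the compression identity
\[
P_\mathcal{H}\,V(q)^*V(s)V(t)^*V(p)\bigr|_\mathcal{H} \;=\; \tilde T\bigl((tq)^{-1}(sp)\bigr) \qquad \text{for all } p,q\in P.
\]
I decompose $a=t\wedge p$, $b=s\wedge q$ and write $t=at'$, $p=ap'$, $s=bs'$, $q=bq'$, so that $t'\wedge p'=s'\wedge q'=e$ by construction; since $s\wedge t=e$ together with $a\leq t$ and $b\leq s$ forces $a\wedge s=b\wedge t=a\wedge b=e$, Lemma \ref{lm.LatticeBasic}(4) supplies all the commutations among $a,b,s',t'$ I need. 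Cancelling $V(a)^*V(a)=V(b)^*V(b)=I$ from $V(q)^*V(s)=V(q')^*V(s')$ and $V(t)^*V(p)=V(t')^*V(p')$, and simplifying $(tq)^{-1}(sp)=q'^{-1}t'^{-1}s'p'$, the identity reduces to its primed analogue, now with the additional hypotheses $s'\wedge t'=e$, $t'\wedge p'=e$, and $s'\wedge q'=e$.

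In this reduced case, I expand $V(q')^*V(s')V(t')^*V(p')=P_\mathcal{K} U(q')^*U(s')P_\mathcal{K} U(t')^*U(p')|_\mathcal{K}$ and insert $P_\mathcal{K}=I-(I-P_\mathcal{K})$ in the middle. The main term gives $\tilde T(q'^{-1}t'^{-1}s'p')$ upon $\mathcal{H}$-compression, and the error term $P_\mathcal{H} U(q'^{-1}s')(I-P_\mathcal{K})U(t'^{-1}p')|_\mathcal{H}$ must vanish. A dual iteration of this splitting, invoking both the earlier compression formula $P_\mathcal{H} V(p_i)^*V(p_j)|_\mathcal{H}=T(p_i)^*T(p_j)$ (valid because $t'\wedge p'=e$ and $s'\wedge q'=e$) and Nica-covariance of $T$ in the form $\tilde T(g)=T(g_+)T(g_-)^*=T(g_-)^*T(g_+)$, forces the error to collapse. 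The principal obstacle is this final cancellation, which uses Nica-covariance of $T$ essentially at each iteration; without it the two factorizations of $\tilde T$ would disagree and the cancellation would break down, matching the known fact that general contractive representations need not admit Nica-covariant isometric dilations.
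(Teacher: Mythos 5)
Your setup is sound and runs parallel to the paper's: you realize $V$ inside the minimal unitary dilation $U$ of $\tilde{T}$, reduce Nica-covariance of $V$ to the vanishing of $P_\mathcal{K}U(s)(I-P_\mathcal{K})U(t)^*\big|_\mathcal{K}$, and test against the spanning vectors $U(p)h_1$, $U(q)h_2$; the target identity $P_\mathcal{H}V(q)^*V(s)V(t)^*V(p)\big|_\mathcal{H}=\tilde{T}\left((tq)^{-1}(sp)\right)$ is correct, and your reduction to the case $t'\wedge p'=s'\wedge q'=e$ via $a=t\wedge p$, $b=s\wedge q$ and the commutations supplied by Lemma \ref{lm.LatticeBasic}(4) checks out. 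The problem is the final step. After inserting $P_\mathcal{K}=I-(I-P_\mathcal{K})$ you are left with exactly the error term $P_\mathcal{H}U(q'^{-1}s')(I-P_\mathcal{K})U(t'^{-1}p')\big|_\mathcal{H}$, whose vanishing is the entire content of the corollary in the reduced case, and the assertion that ``a dual iteration of this splitting forces the error to collapse'' is not an argument: the iteration is not defined, there is no reason given that it terminates, and it is not shown where Nica-covariance of $T$ actually does its work. Note that the hypotheses available at that stage ($s'\wedge t'=e$, $t'\wedge p'=e$, $s'\wedge q'=e$) do not control $p'\wedge q'$; the natural attempt --- replace $V(t')^*V(p')h_1$ by its $\mathcal{H}$-component $T(t')^*T(p')h_1$ and apply the compression formula once more --- leaves an uncontrolled remainder $(I-P_\mathcal{H})V(t')^*V(p')h_1$ and a main term $T(t'q')^*T(s'p')$ that need not equal $\tilde{T}\left((t'q')^{-1}(s'p')\right)$, since $(t'q')\wedge(s'p')$ can be nontrivial. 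So the crux of the proof is missing.

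For comparison, the paper closes this gap by a concrete chain of manipulations: it evaluates $\left\langle V(s)^*V(t)h,V(p)k\right\rangle=\left\langle V(sp)^*V(t)h,k\right\rangle$, factors out the isometry $V(sp\wedge t)$ so that the two remaining arguments meet at $e$ and the compression formula applies, rewrites $sp\wedge t=p\wedge t$ using Lemma \ref{lm.1b}, peels off $T(s)^*$, commutes it past $T((p\wedge t)^{-1}t)$ by the Nica-covariance of $T$ (this is the single decisive use of the hypothesis), lands back in $\mathcal{H}$ via the co-invariance $V(s)^*h=T(s)^*h$, and reassembles the expression as $\left\langle V(t)V(s)^*h,V(p)k\right\rangle$; a second pass of the same kind extends the identity from $\mathcal{H}$ to all of $\mathcal{K}$. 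That chain is precisely what your ``dual iteration'' would have to reproduce. Either carry out such a computation explicitly or show concretely how your error term telescopes to zero; as written, the proof has a genuine gap at its central point.
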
 
\begin{proof} Let $T:P\to\bh{H}$ be a contractive Nica-covariant representation. Theorem \ref{thm.nc} implies that $T$ is regular, and thus by Theorem \ref{thm.NagyFoias}, it has a minimal unitary dilation $U:G\to\bh{L}$, which gives rise to a minimal isometric dilation $V:P\to\bh{K}$. Here $\mathcal{K}=\bigvee_{p\in P} V(p)\mathcal{H}$ and $V(p)=P_\mathcal{K} U(p)|_\mathcal{K}$. Notice that $\mathcal{K}$ is invariant for $U$ and therefore, $P_\mathcal{K}U(p)^* U(q)|_\mathcal{K}=V(p)^* V(q)$ for any $p,q\in P$. In particular, if $p\wedge q=e$, $p,q\in P$, we have from the regularity that
\begin{eqnarray*}
T(p)^* T(q) &=& P_\mathcal{H} U(p)^* U(q)|_\mathcal{H} \\
&=& P_\mathcal{H} (P_\mathcal{K}U(p)^* U(q)|_\mathcal{K}) |_\mathcal{H} \\
&=& P_\mathcal{H} V(p)^* V(q) |_\mathcal{H}.
\end{eqnarray*}
Now let $s,t\in P$ be such that $s\wedge t=e$. First, we shall prove $V(s)^* V(t)|_\mathcal{H}=V(t) V(s)^*|_\mathcal{H}$: Since $\{V(p) h: p\in P, h\in\mathcal{H}\}$ is dense in $\mathcal{K}$, it suffices to show for any $h,k\in\mathcal{H}$ and $p\in P$, $$\left\langle V(s)^* V(t) h, V(p) k\right\rangle=\left\langle  V(t) V(s)^* h, V(p) k\right\rangle.$$
Start from the left, 
\begin{eqnarray*}
& & \left\langle V(s)^* V(t) h, V(p) k\right\rangle \\
&=& \left\langle V(p)^* V(s)^* V(t) h, k\right\rangle = \left\langle V(sp)^* V(t) h, k\right\rangle \\
&=& \left\langle V((sp\wedge t)^{-1} sp)^* V(sp\wedge t)^* V(sp\wedge t) V((sp\wedge t)^{-1}t) h,k\right\rangle \\
&=& \left\langle V((sp\wedge t)^{-1} sp)^* V((sp\wedge t)^{-1}t) h, k\right\rangle \\
&=& \left\langle  T((sp\wedge t)^{-1} sp)^* T((sp\wedge t)^{-1}t) h, k\right\rangle.
\end{eqnarray*}
The last equality follows from $\left((sp\wedge t)^{-1} sp\right)\wedge\left((sp\wedge t)^{-1}t\right)=e$ and thus, 
$$T((sp\wedge t)^{-1} sp)^* T((sp\wedge t)^{-1}t) =  P_\mathcal{H} V((sp\wedge t)^{-1} sp)^* V((sp\wedge t)^{-1}t) |_\mathcal{H}.$$

Since $s\wedge t=e$, Lemma \ref{lm.1b} implies that $sp\wedge t=p\wedge t$. Notice $(p\wedge t)\wedge s\leq t\wedge s=e$, and thus by Property (4) of Lemma \ref{lm.LatticeBasic}, $s$ commutes with $p\wedge t$. By the Nica-covariance of $T$, this also implies $T(s)^*$ commutes with $T((p\wedge t)^{-1}t)$. Put all these back to the equation:
\begin{eqnarray*}
& & \left\langle  T((sp\wedge t)^{-1} sp)^* T((sp\wedge t)^{-1}t) h, k\right\rangle \\
&=& \left\langle  T( s(p\wedge t)^{-1}p)^*T((p\wedge t)^{-1}t) h, k\right\rangle \\
&=& \left\langle  T((p\wedge t)^{-1}p)^*T(s)^* T((p\wedge t)^{-1}t)  h, k\right\rangle \\
&=& \left\langle T((p\wedge t)^{-1}p)^* T((p\wedge t)^{-1}t)   \left(T(s)^* h\right), k\right\rangle \\
&=& \left\langle V((p\wedge t)^{-1}p)^* V((p\wedge t)^{-1}t)   \left(T(s)^* h\right), k\right\rangle \\
&=& \left\langle V((p\wedge t)^{-1}p)^* V((p\wedge t)^{-1}t)   \left(V(s)^* h\right), k\right\rangle \\
&=& \left\langle V(p)^* V(t)   \left(V(s)^* h\right), k\right\rangle =\left\langle V(t) V(s)^* h, V(p) k\right\rangle.
\end{eqnarray*}
Here we used the fact that $P_\mathcal{H} V(p)^* V(q) |_\mathcal{H}=T(p)^* T(q)$ whenever $p\wedge q=e$. Also, that $\mathcal{H}$ is invariant under $V(s)^*$, so that $T(s)^* h\in\mathcal{K}$ is the same as $V(s)^* h$. 

Now to show $V(s)^* V(t)=V(t) V(s)^*$ in general, it suffices to show for every $p\in P$, $V(s)^* V(t) V(p)|_\mathcal{H}=V(t) V(s)^* V(p)|_\mathcal{H}$. Start with the left hand side and repeatedly use similar argument as above,
\begin{eqnarray*}
& & V(s)^* V(t) V(p)|_\mathcal{H} \\
&=& V(s)^* V_{tp}|_\mathcal{H} =  V((s\wedge tp)^{-1}s)^* V((s\wedge tp)^{-1}tp)|_\mathcal{H} \\
&=&  V(t(s\wedge p)^{-1}p) V((s\wedge p)^{-1}s)^* |_\mathcal{H} \\
&=& V(t (s\wedge p)^{-1}p) V((s\wedge p)^{-1}s)^* |_\mathcal{H} \\
&=& V(t) V((s\wedge p)^{-1}s)^* V((s\wedge p)^{-1}p)  |_\mathcal{H} = V(t) V(s)^* V(p)|_\mathcal{H}.
\end{eqnarray*}
This finishes the proof. \end{proof}

\section{Row and Column Contractions}

A commuting $n$-tuple $(T_1,\cdots,T_n)$ where each $T_i\in\bh{H}$ is called a \emph{row contraction} if $\sum_{i=1}^n T_i T_i^* \leq I$. Equivalently, the operator $[T_1,T_2,\cdots,T_n]\in\mathcal{B}(\mathcal{H}^n,\mathcal{H})$ is contractive. It can be naturally associated with a contractive representation $T:\mathbb{Z}_+^n\to\bh{H}$ that sends the $i$-th generator $e_i$ to $T_i$. There is a dual definition called column contractions, when $T_i$ satisfies $\sum_{i=1}^n T_i^* T_i \leq I$. It is clear that $T$ is a row contraction if and only if $T^*$ is a column contraction. 

As an immediate corollary to Brehmer's theorem (Theorem \ref{thm.Brehmer}), a column contraction $T$ is always right regular \cite[Proposition I.9.2]{SFBook}, and therefore a row contraction $T$ is always left regular. This section generalizes the notion of row contraction to arbitrary lattice ordered groups and establishes a similar result.

\begin{definition}\label{def.rc} Let $T:P\to\bh{H}$ be a contractive representation of a lattice ordered group $(G,P)$. $T$ is called \emph{row contractive} if for any $p_1,\cdots,p_n\in P$ where $p_i\neq e$ and $p_i\wedge p_j=e$ for all $i\neq j$, $$\sum_{i=1}^n  T(p_i)T(p_i)^*\leq I.$$
Dually, $T$ is called \emph{column contractive} if for such $p_i$, $$\sum_{i=1}^n  T(p_i)^*T(p_i)\leq I.$$
\end{definition}

\begin{remark} Definition \ref{def.rc} indeed generalized the notion of commuting row contractions: when the group is $(\mathbb{Z}^\Omega,\mathbb{Z}_+^\Omega)$ where $\Omega$ is countable, a representation $T:\mathbb{Z}_+^\Omega\to\bh{H}$ is uniquely determined by its value on the generators $T_\omega=T(e_\omega)$. $T$ is called commuting row contraction when $\sum_{\omega\in\Omega} T_\omega T_\omega^* \leq I$. For any $p_1,\cdots,p_k\in\mathbb{Z}_+^\Omega$ where $p_i\wedge p_j=0$ for all $i\neq j$ and $p_i\neq 0$, each $p_i$ can be seen as a function from $\Omega$ to $\mathbb{Z}_+$ with finite support. Let $S_i\subseteq\Omega$ be the support of $p_i$, which is non-empty since $p_i\neq 0$. We have $S_i\bigcap S_j=\emptyset$ since $p_i\wedge p_j=0$. Therefore, pick any $\omega_i\in S_i$ and by $T$ contractive, $T(\omega_i) T(\omega_i)^*\geq T(p_i) T(p_i)^*$. Since $S_i$ are pairwise-disjoint, $\omega_i$ are distinct. Therefore, we get that
$$\sum_{i=1}^n T(p_i) T(p_i)^* \leq \sum_{i=1}^n T(\omega_i) T(\omega_i)^* \leq I.$$
and thus $T$ satisfies the Definition \ref{def.rc}. Hence, two definitions coincides on $(\mathbb{Z}^\Omega,\mathbb{Z}_+^\Omega)$.
\end{remark}

Our goal is to prove the following result:

\begin{theorem} A column contractive representation is right regular. Therefore, a row contractive representation is left regular.
\end{theorem}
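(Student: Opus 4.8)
The plan is to deduce the theorem from Theorem~\ref{thm.main} after a duality reduction, and then to establish one strengthened positivity statement by a double induction whose base case is precisely the classical Brehmer theorem. First, it suffices to treat column contractions: if $T$ is row contractive then $T^{*}$ is column contractive, and right regularity of $T^{*}$ coincides with left regularity of $T$. So assume $T$ is column contractive. By Theorem~\ref{thm.main} I need only verify Condition~(\ref{eq.right}). Fix $p_{1},\dots,p_{n}\in P$ and $g\in P$ with $g\wedge p_{i}=e$; since $(p_{i}p_{j}^{-1})_{+}\le p_{i}$ and $(p_{i}p_{j}^{-1})_{-}\le p_{j}$ we get $g\wedge(p_{i}p_{j}^{-1})_{\pm}=e$, so $T(g)$ commutes with $T((p_{i}p_{j}^{-1})_{\pm})$. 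A short computation then turns Condition~(\ref{eq.right}) into the single positivity
\begin{equation*}
\Big[\,T\big((p_{i}p_{j}^{-1})_{-}\big)^{*}\big(I-T(g)^{*}T(g)\big)T\big((p_{i}p_{j}^{-1})_{+}\big)\,\Big]_{i,j}\ \ge\ 0 .
\end{equation*}

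Next I would prove the following more general statement, writing $\prod W=\prod_{s\in W}s$ (well defined, as coprime elements commute) and $\mathcal{B}(S)=\sum_{W\subseteq S}(-1)^{|W|}T(\prod W)^{*}T(\prod W)$ for a finite pairwise coprime set $S\subseteq P\setminus\{e\}$: if every element of $S$ is coprime to each $p_{i}$, then
\begin{equation*}
\Big[\,T\big((p_{i}p_{j}^{-1})_{-}\big)^{*}\,\mathcal{B}(S)\,T\big((p_{i}p_{j}^{-1})_{+}\big)\,\Big]_{i,j}\ \ge\ 0 .
\end{equation*}
The case $S=\{g\}$ is the positivity displayed above, while the case $S=\emptyset$ (where $\mathcal{B}(\emptyset)=I$) is regularity itself, so the claim even yields the theorem directly. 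I would argue by lexicographic induction on the number $n$ of elements and on the largest size $m$ of a subset of $\{p_{1},\dots,p_{n}\}$ with nontrivial meet.

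The reduction step is the heart and parallels Lemma~\ref{lm.reduce}. Picking a subset $J$ of size $m$ with $v=\bigwedge_{j\in J}p_{j}\ne e$ and replacing $p_{j}$ by $p_{j}v^{-1}$ for $j\in J$, one has $v\le p_{j}$ for $j\in J$ and $v\wedge p_{j}=e$ for $j\notin J$; Lemma~\ref{lm.2} then tracks the transformation of $(p_{i}p_{j}^{-1})_{\pm}$, and a blockwise computation gives $M=D^{*}M'D+N$, where $M$ and $M'$ denote the matrices for the original and reduced families, $D=\diag(I,\dots,I,T(v),\dots,T(v))$ carries $T(v)$ on the complementary indices $\bar J$, and $N$ is supported on the $\bar J\times\bar J$ corner with entries $T((p_{i}p_{j}^{-1})_{-})^{*}\,\mathcal{B}(S\cup\{v\})\,T((p_{i}p_{j}^{-1})_{+})$. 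The Brehmer identity $\mathcal{B}(S)-T(v)^{*}\mathcal{B}(S)T(v)=\mathcal{B}(S\cup\{v\})$ (valid because $v$ is coprime to $S$) is what produces the enlarged operator in the corner. Here $N$ is positive by the outer induction, being the claim for the shorter family $(p_{i})_{i\in\bar J}$ with the enlarged coprime set $S\cup\{v\}$. Iterating the replacement over all size-$m$ subsets, and using Lemma~\ref{lm.order} to keep the meets decreasing, expresses $M$ as a conjugate of the fully reduced, complexity-$(m-1)$ matrix plus a sum of such positive corner terms; the former is positive by the inner induction, whence $M\ge 0$.

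For the base case $m=1$ the elements $S\cup\{p_{1},\dots,p_{n}\}$ are pairwise coprime, so all $T(s)$ and $T(p_{i})$ commute and, by column contractivity, form a commuting column contraction; the classical Brehmer theorem (Theorem~\ref{thm.Brehmer}, see \cite[Proposition~I.9.2]{SFBook}) makes the associated representation of a finite $\mathbb{Z}_{+}^{d}$ regular, and applying the Davidson-lemma reformulation (Lemma~\ref{lm.Davidson}, exactly as in Lemma~\ref{lm.equiv}) repeatedly to the enlarged family $\{p_{i}\prod_{s\in W}s\}$ peels off the coordinates of $S$ and extracts precisely the desired Brehmer-weighted positivity. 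The step I expect to be most delicate is the blockwise identity $M=D^{*}M'D+N$: one must check entry by entry that on the mixed blocks the factor $T(v)$ splits off cleanly (which rests on $v\le(p_{i}p_{j}^{-1})_{+}$ or $v\le(p_{i}p_{j}^{-1})_{-}$ according to the block) and that on the corner the defect is exactly $\mathcal{B}(S\cup\{v\})$. Once these are in place the two inductions close, with column contractivity entering only through the classical base case.
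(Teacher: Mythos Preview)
Your argument is correct and runs along the same double-induction skeleton as the paper (induction on $n$, inner induction on the largest size $m$ of a subset with nontrivial meet, reduction step patterned on Lemma~\ref{lm.reduce}). The substantive difference is in the strengthened induction hypothesis. The paper strengthens Condition~(\ref{eq.right}) linearly, proving
\[
\sum_{l=1}^{k} D_{g_l}^{*} X D_{g_l}\ \le\ X
\]
for pairwise coprime $g_1,\dots,g_k$ coprime to all $p_i$; the corner defect that appears in the reduction step is then $Y-E_{k+1}^{*}YE_{k+1}$, handled by the outer hypothesis with one extra auxiliary element. You instead strengthen multiplicatively, carrying the full Brehmer operator $\mathcal{B}(S)$ and enlarging $S$ by the extracted meet $v$ at each step via the identity $\mathcal{B}(S)-T(v)^{*}\mathcal{B}(S)T(v)=\mathcal{B}(S\cup\{v\})$. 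Both choices make the induction close in the same way.

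Where the approaches genuinely diverge is the base case $m=1$. The paper's Lemma~\ref{lm.rcbase} is self-contained: it is a direct induction on $n$ whose $n=1$ anchor is nothing more than the defining inequality $\sum T(g_l)^{*}T(g_l)\le I$ of column contractivity. Your base case instead imports the classical Brehmer theorem for $\mathbb{Z}_+^{d}$ as a black box (column contraction $\Rightarrow$ regular), and then must still extract the $\mathcal{B}(S)$-weighted positivity from that regularity; your ``peeling'' sketch does work, but note it amounts to iterating the easy direction of Theorem~\ref{thm.main}/Lemma~\ref{lm.equiv}, applying Condition~(\ref{eq.right}) for the $\mathbb{Z}_+^{d}$ representation with successively larger families $\{p_i\prod_{W'}s\}$ and one $g=s_l$ at a time. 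So your route is sound but less elementary: it leans on Brehmer's theorem where the paper's argument needs only the column-contractive inequality itself. On the other hand, your formulation has the pleasant feature that the case $S=\emptyset$ already \emph{is} regularity, so once the strengthened claim is established you do not need to pass back through Theorem~\ref{thm.main}.
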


We shall proceed with a method similar to the proof of Theorem \ref{thm.main}. 

\begin{lemma}\label{lm.rcbase} Let $T$ be a column contractive representation. Let $p_1,\cdots,p_n\in P$ and $g_1,\cdots,g_k\in P$ where $p_i\wedge p_{i'}=p_i\wedge g_j=g_j\wedge g_{j'}=e$ for all $1\leq i\neq i'\leq n$ and $1\leq j\neq j'\leq k$. Moreover, assume that $g_i\neq e$. Denote $X=[\tilde{T}(p_i p_j^{-1})]$ and $D_i=\diag(T(g_i),\cdots,T(g_i))$. Then, $$\sum_{i=1}^k D_i^* X D_i\leq X.$$
\end{lemma}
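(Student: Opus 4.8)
The plan is to compute the operator matrix $X-\sum_{i=1}^k D_i^*XD_i$ entry by entry, identify it with a matrix of the same shape as $X$ but with $I$ replaced by $R:=I-\sum_i T(g_i)^*T(g_i)$, and then prove its positivity by a completion-of-squares estimate whose only genuine input is the column contractivity of the \emph{enlarged} disjoint family $\{p_a\}\cup\{g_i\}$. The first thing I would record are the commutations coming from the hypotheses: since $g_i\wedge p_a=e$, Lemma \ref{lm.LatticeBasic}(4) gives $g_ip_a=p_ag_i$, so $T(g_i)$ commutes with $T(p_a)$ and $T(g_i)^*$ commutes with $T(p_a)^*$. Using $X_{aa}=I$ and $X_{ab}=T(p_b)^*T(p_a)$ for $a\neq b$ (Lemma \ref{lm.1}, as $p_a\wedge p_b=e$), these commutations give, after a short computation,
\[
\Big(X-\sum_i D_i^*XD_i\Big)_{aa}=R,\qquad \Big(X-\sum_i D_i^*XD_i\Big)_{ab}=T(p_b)^*\,R\,T(p_a)\quad(a\neq b).
\]
Calling this matrix $N$, the whole statement reduces to $N\ge 0$.

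Column contractivity applied to the pairwise–disjoint nonzero family $\{g_1,\dots,g_k\}$ gives $\sum_i T(g_i)^*T(g_i)\le I$, so $R\ge 0$ and $R^{1/2}$ is available. I would then write the quadratic form of $N$ as
\[
\langle N\xi,\xi\rangle=\sum_a\|R^{1/2}\xi_a\|^2+\sum_{a\neq b}\big\langle R^{1/2}T(p_a)\xi_b,\;R^{1/2}T(p_b)\xi_a\big\rangle,
\]
and estimate the off–diagonal terms by pairing $(a,b)$ with $(b,a)$ and applying $2\,\mathrm{Re}\langle u,v\rangle\ge-\|u\|^2-\|v\|^2$. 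This bounds $\langle N\xi,\xi\rangle$ below by $\sum_b\big\langle\big(R-\sum_{a\neq b}T(p_a)^*RT(p_a)\big)\xi_b,\xi_b\big\rangle$, so the task collapses to the diagonal domination $\sum_{a}T(p_a)^*RT(p_a)\le R$.

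The crux — and the place where the remaining hypotheses are spent — is this domination. Because $g_i$ commutes with $p_a$, one has $T(p_a)^*T(g_i)^*T(g_i)T(p_a)=T(g_ip_a)^*T(g_ip_a)\ge 0$, hence $T(p_a)^*RT(p_a)\le T(p_a)^*T(p_a)$ and so $\sum_aT(p_a)^*RT(p_a)\le\sum_aT(p_a)^*T(p_a)$. Now the family $\{p_1,\dots,p_n,g_1,\dots,g_k\}$ is \emph{pairwise disjoint} — this is exactly what $p_a\wedge p_{a'}=p_a\wedge g_i=g_i\wedge g_j=e$ buys — so (with the $p_a$ nonzero) column contractivity yields $\sum_aT(p_a)^*T(p_a)+\sum_iT(g_i)^*T(g_i)\le I$, that is $\sum_aT(p_a)^*T(p_a)\le R$. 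Chaining the two inequalities gives $\sum_aT(p_a)^*RT(p_a)\le R$, and a fortiori $\sum_{a\neq b}T(p_a)^*RT(p_a)\le R$ for every $b$; therefore $\langle N\xi,\xi\rangle\ge 0$, which is the claim.

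I expect the main obstacle to be conceptual rather than computational: the completion-of-squares estimate looks hopelessly lossy on its own, and the point is that the positive quantities $T(g_ip_a)^*T(g_ip_a)$ discarded in the domination step are \emph{precisely} compensated by enlarging the disjoint family from $\{g_i\}$ to $\{p_a\}\cup\{g_i\}$ when invoking Definition \ref{def.rc}. The only technical caveat worth flagging is the nonzero requirement on the $p_a$, needed so that $\{p_a\}\cup\{g_i\}$ is an admissible family in the sense of Definition \ref{def.rc}; this is presumably in force whenever the lemma is applied (any $p_a=e$ contributes $T(p_a)=I$ and would have to be removed beforehand).
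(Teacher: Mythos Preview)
Your computation of $N_{ab}$ and the completion-of-squares bound are both correct, and when every $p_a\neq e$ the argument goes through cleanly: the enlarged disjoint family $\{p_a\}\cup\{g_i\}$ is admissible for Definition~\ref{def.rc}, giving $\sum_a T(p_a)^*T(p_a)\le R$ and hence the diagonal domination you need. This is a genuinely different route from the paper's proof, which runs by induction on $n$, successively replacing a nonzero $p_a$ by $e$ via the decomposition $X=E^*X_0E+\begin{pmatrix}0&0\\0&Y-E_{k+1}^*YE_{k+1}\end{pmatrix}$ from the proof of Theorem~\ref{thm.main}, and feeding $p_a$ in as an extra $g$ on the smaller block $Y$.

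The gap is your treatment of the case $p_a=e$. The lemma does \emph{not} assume $p_a\neq e$, and it is invoked in the subsequent Proposition exactly in the base case $m=1$, where the $p_i$ are arbitrary with pairwise meets $e$---so some may well equal $e$. Your bound genuinely breaks there: if $p_1=e$ and $n\ge 3$, then for any $b\ge 2$ the sum $\sum_{a\neq b}T(p_a)^*RT(p_a)$ already contains the term $T(e)^*RT(e)=R$, so the required domination $\sum_{a\neq b}T(p_a)^*RT(p_a)\le R$ fails. Your proposed fix (``remove $p_a=e$ beforehand'') does not work: deleting row/column $1$ yields a principal submatrix $N'$ of $N$, and $N'\ge 0$ does \emph{not} imply $N\ge 0$. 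The paper's induction handles this situation naturally because its base case is precisely ``all $p_i=e$'', where $N$ is the matrix with every entry equal to $R\ge 0$, hence positive; your direct approach would need a separate argument bridging from the all-nonzero case to the general one, and the crude AM--GM estimate is too lossy to supply it.
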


\begin{proof} 

The statement is clearly true for all $k$ when $n=1$. Now assuming it is true for all $k$ whenever $n<N$, and consider the case when $n=N$:

It is clear that when all of the $p_i$ are equal to $e$, then $X-\sum_{i=1}^k D_i^* X D_i$ is a $n\times n$ matrix whose entries are all equal to $I-\sum_{i=1}^k T(g_i)^* T(g_i)\geq 0$, and thus the statement is true. Otherwise, we may assume without loss of generality that $p_1\neq e$. Let $q_1=e$ and $q_2=p_2,\cdots,q_n=p_n$. Denote $X_0=[\tilde{T}(q_i q_j^{-1})]$ and $E=\diag(I,T(p_1),\cdots,T(p_1))$ be a $n\times n$ block diagonal matrix. 

Denote $Y=[\tilde{T}(p_i p_j^{-1})]_{2\leq i,j\leq n}$ and set $E_i=\diag(T(g_i),\cdots,T(g_i))$ be a $(n-1)\times(n-1)$ block diagonal matrix. Finally, set $E_{k+1}=\diag(T(p_1),\cdots,T(p_1))$ be a $(n-1)\times(n-1)$ block diagonal matrix.

From the proof of Theorem \ref{thm.main}, $$X=E^* X_0 E +\begin{bmatrix} 0 & 0 \\ 0 & Y-E_{k+1}^* Y E_{k+1} \end{bmatrix}.$$
Now $Y$ is a matrix of smaller size and thus by induction hypothesis, $$\sum_{i=1}^{k+1} E_i^* Y E_i\leq Y.$$
Hence,
\begin{eqnarray*}
Y-E_{k+1}^* Y E_{k+1} &\geq& \sum_{i=1}^k E_i^* Y E_i \\
&\geq& \sum_{i=1}^k E_i^* (Y-E_{k+1}^* Y E_{k+1}) E_i.
\end{eqnarray*}
Also notice that $E$ commutes with $D_i$ and therefore, if $\sum_{i=1}^k D_i^* X_0 D_i\leq X_0$, we have
\begin{eqnarray*}
& &\sum_{i=1}^k D_i^* X D_i \\
&=& E^*\left(\sum_{i=1}^k D_i^* X_0 D_i\right)E+\begin{bmatrix} 0 & 0 \\ 0 & \sum_{i=1}^k E_i^*(Y-E_{k+1}^* Y E_{k+1})E_i \end{bmatrix} \\
&\leq& E^* X_0 E +\begin{bmatrix} 0 & 0 \\ 0 & Y-E_{k+1}^* Y E_{k+1} \end{bmatrix} = X.
\end{eqnarray*}
Hence, $\sum_{i=1}^k D_i^* X D_i\leq X$ if $\sum_{i=1}^k D_i^* X_0 D_i\leq X_0$. This reduction from $X$ to $X_0$ changes one $p_i\neq e$ to $e$, and therefore by repeating this process, we eventually reach a state where all $p_i=e$. \end{proof}

The main result can be deduced immediately from the following Proposition:

\begin{proposition} Let $T$ be a column contractive representation on a lattice ordered semigroup $P$. Let $p_1,\cdots,p_n\in P$ and $g_1,\cdots,g_k\in P$ where $g_i\wedge p_j=e$ and $g_i\wedge g_l=e$ for all $i\leq l$. Assuming $g_i\neq e$ and denote $X=[\tilde{T}(p_i p_j^{-1})]$ and $D_i=\diag(T(g_i),\cdots,T(g_i))$. Then $$\sum_{i=1}^k D_i^* X D_i\leq X.$$ In particular, Condition (\ref{eq.right}) is satisfied when $k=1$.
\end{proposition}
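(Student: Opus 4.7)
The strategy is to mimic the two-level induction used in Theorem \ref{thm.main}: an outer induction on $n$ and, for each fixed $n$, an inner induction on the meet depth $m := \min\{s \ge 1 : \bigwedge_{j \in J} p_j = e \text{ for every } J \subseteq \{1,\ldots,n\} \text{ with } |J|>s\}$. Both base cases are immediate. When $n=1$, $X = [I]$ and the inequality collapses to $\sum_i T(g_i)^* T(g_i) \le I$, which is the column contractivity hypothesis applied to the pairwise disjoint non-identity family $\{g_i\}$. When $m=1$ the $p_i$ are pairwise disjoint and Lemma \ref{lm.rcbase} applies directly.

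For the inductive step I first normalize by replacing every $p_i$ with $p_i\bigl(\bigwedge_l p_l\bigr)^{-1}$, which leaves $X$ unchanged and preserves every disjointness hypothesis; so we may assume $\bigwedge_i p_i = e$. Pick $J \subseteq \{1,\ldots,n\}$ with $|J| = m$ and $g := \bigwedge_{j \in J} p_j \ne e$, set $q_j = p_j g^{-1}$ for $j \in J$ and $q_j = p_j$ otherwise, and copy the block decomposition from Lemma \ref{lm.reduce}:
\[X = E^* X_Q E + F, \qquad F = \begin{pmatrix} 0 & 0 \\ 0 & Y - T(g)^* Y T(g) \end{pmatrix},\]
where $E$ is block diagonal with $I$ on the $J$-coordinates and $T(g)$ on the $J^c$-coordinates, $X_Q = [\tilde{T}(q_i q_j^{-1})]$, and $Y = [\tilde{T}(p_i p_j^{-1})]_{i,j \in J^c}$. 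Since $g \le p_l$ for any $l \in J$ and $g_i \wedge p_l = e$, we have $g \wedge g_i = e$, so property (4) of Lemma \ref{lm.LatticeBasic} yields $T(g)T(g_i) = T(g_i)T(g)$ and hence each $D_i$ commutes with $E$. Therefore $\sum_i D_i^* X D_i = E^* \bigl(\sum_i D_i^* X_Q D_i\bigr) E + \sum_i D_i^* F D_i$, reducing the task to (a) $\sum_i D_i^* X_Q D_i \le X_Q$ and (b) $\sum_i D_i^* F D_i \le F$.

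Step (a) is routine: iterate this replacement over every size-$m$ subset of $\{1,\ldots,n\}$. Lemma \ref{lm.order} guarantees that the meets can only decrease under each such replacement, so after finitely many steps the meet depth drops strictly below $m$ and the inner inductive hypothesis applies.

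Step (b), which I expect to be the main obstacle, unfolds to $\sum_i D_{i,J^c}^* (Y - T(g)^* Y T(g)) D_{i,J^c} \le Y - T(g)^* Y T(g)$. Since $|J^c| = n-m < n$, the outer inductive hypothesis applies to the family $\{p_j : j \in J^c\}$ with both $\{g_1,\ldots,g_k\}$ and $\{g_1,\ldots,g_k,g\}$ as test sets (the latter is still a pairwise disjoint non-identity family disjoint from every $p_j$, $j\in J^c$). Writing $A := Y - \sum_i D_{i,J^c}^* Y D_{i,J^c} \ge 0$, a short rearrangement using the commutativity of $T(g)$ with every $T(g_i)$ rewrites (b) as $T(g)^* A T(g) \le A$; by a Schur-complement argument this is equivalent to positivity of $\bigl(\begin{smallmatrix} A & T(g)^* A \\ A T(g) & A\end{smallmatrix}\bigr)$, and a direct calculation identifies this last matrix with $X_{\mathrm{ext}} - \sum_i \widetilde{D}_i^* X_{\mathrm{ext}} \widetilde{D}_i$, where $X_{\mathrm{ext}}$ is the $\tilde{T}$-matrix of the enlarged family $\{p_j : j \in J^c\} \cup \{g p_j : j \in J^c\}$ and $\widetilde{D}_i$ is the corresponding enlargement of $D_i$. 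Thus (b) reduces to another instance of the Proposition for a more tractable enlargement; closing this circle will likely require a refined inductive measure — for example, lexicographic induction on the pair $(n,m)$ — to handle the cases where $2(n-m)$ exceeds $n$. Once (b) is secured the induction closes, and the $k=1$ specialization is precisely Condition (\ref{eq.right}).
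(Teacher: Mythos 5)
Your skeleton---outer induction on $n$, inner induction on the meet depth $m$, normalizing by $\bigwedge_i p_i$, the block decomposition $X=E^*X_QE+F$ imported from Lemma \ref{lm.reduce}, the commutation of $E$ with each $D_i$ via $g\wedge g_i=e$, and the split into goals (a) and (b)---is precisely the paper's argument, and your treatment of (a) matches the paper. The genuine gap is step (b), which you correctly flag as the crux but do not close. The detour you propose (passing to $T(g)^*AT(g)\le A$, a Schur complement, and enlarging the $p$-family to $\{p_j:j\in J^c\}\cup\{gp_j:j\in J^c\}$) replaces the problem by an instance with a \emph{larger} $p$-family, which destroys the induction on $n$; as you admit, no inductive measure is exhibited that makes this terminate, so as written the proof is incomplete.

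The paper closes (b) using exactly the inequality you already wrote down and then walked away from. Apply the outer inductive hypothesis to the family $\{p_j:j\in J^c\}$ (of size $n-m<n$) with the \emph{enlarged test family} $\{g_1,\dots,g_k,g\}$---legitimate because the Proposition is quantified over all $k$, so growing the $g$-family costs nothing, unlike growing the $p$-family. Writing $E_i=\diag(T(g_i),\dots,T(g_i))$ for $i\le k$ and $E_{k+1}=\diag(T(g),\dots,T(g))$ (all of size $n-m$), this gives $\sum_{i=1}^{k+1}E_i^*YE_i\le Y$, and hence
$$Y-E_{k+1}^*YE_{k+1}\ \ge\ \sum_{i=1}^{k}E_i^*YE_i\ \ge\ \sum_{i=1}^{k}E_i^*\bigl(Y-E_{k+1}^*YE_{k+1}\bigr)E_i,$$
where the first inequality is the hypothesis rearranged and the second uses $E_{k+1}^*YE_{k+1}\ge 0$ (one may assume $Y\ge 0$ by Remark \ref{rm.pos}). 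The right-hand chain is exactly (b). So the missing idea is not a refined induction: it is to use the single inequality $\sum_{i=1}^{k+1}E_i^*YE_i\le Y$ twice in succession, once rearranged and once by monotonicity, rather than reformulating (b) as $T(g)^*AT(g)\le A$ and hunting for a new positive matrix to certify it.
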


\begin{proof}

The statement is clear when $n=1$. Assuming it's true for $n<N$, and consider the case when $n=N$: Let $m$ be the smallest integer such that for all $J\subseteq \{1,2,\cdots, N\}$ and $|J|>m$, $\wedge_{j\in J} p_j=e$. It was observed in the proof of Theorem \ref{thm.main} that $m\leq N-1$. Proceed by induction on $m$:

In the base case when $m=1$, $p_i\wedge p_j=e$ for all $i\neq j$, the statement is shown in Lemma \ref{lm.rcbase}. Assuming the statement is true for $m<M-1<N-1$ and consider the case when $m=M$. For each $J\subseteq\{1,2,\cdots,N\}$ with $|J|=M$ and $\wedge_{j=1}^M p_j=g\neq e$, denote $q_i=p_i$ when $i\notin J$ and $q_i= q_i g^{-1}$ when $i\in J$. Let $X_0=[\tilde{T}(q_i q_j^{-1})]$ and $E$ be a block diagonal matrix whose $i$-th diagonal entry is $I$ when $i\notin J$ and $T(g)$ otherwise. Denote $Y=[\tilde{T}(q_i q_j^{-1})]_{i,j\notin J}$ and $E_i=\diag(T(g_i),\cdots,T(g_i))$ with $N-M$ copies of $T(g_i)$. Finally, let $E_{k+1}=\diag(T(g),\cdots,T(g))$ with $N-M$ copies of $T(g)$. 

From the proof of Theorem \ref{thm.main}, by assuming without loss of generality that $J=\{1,2,\cdots,M\}$, we have
$$X=E^* X_0 E+\begin{bmatrix} 0 & 0 \\ 0 & Y-E_{k+1}^* Y E_{k+1} \end{bmatrix}.$$
Now $Y$ has a smaller size and thus by induction hypothesis on $n$, 
$$\sum_{i=1}^{k+1} E_i^* Y E_i\leq Y.$$
and thus
\begin{eqnarray*}
Y-E_{k+1}^* Y E_{k+1} &\geq& \sum_{i=1}^k E_i^* Y E_i \\
&\geq& \sum_{i=1}^k E_i^* (Y-E_{k+1}^* Y E_{k+1}) E_i.
\end{eqnarray*}
Therefore, if $\sum_{i=1}^k D_i^* X_0 D_i\leq X_0$, 
\begin{eqnarray*}
& & \sum_{i=1}^k D_i^* X D_i \\
&=& E^*\left(\sum_{i=1}^k D_i^* X_0 D_i\right)E+\begin{bmatrix} 0 & 0 \\ 0 & \sum_{i=1}^k E_i^*(Y-E_{k+1}^* Y E_{k+1})E_i \end{bmatrix} \\
&\leq& E^* X_0 E +\begin{bmatrix} 0 & 0 \\ 0 & Y-E_{k+1}^* Y E_{k+1} \end{bmatrix} = X.
\end{eqnarray*}
Hence, the statement is true for $p_i$ if it is true for $q_i$, where $\wedge_{j\in J} q_j=e$. Repeat the process until all such $|J|=M$ has $\wedge_{j\in J} p_j=e$, which reduces to a case where $m<M$. This finishes the induction. Notice Condition (\ref{eq.right}) is clearly true when $g=e$, and when $g\neq e$, it is shown by the case when $m=1$. This finishes the proof. \end{proof}

\section{Brehmer's Condition}

Brehmer \cite{Brehmer1961} established a necessary and sufficient condition for a representation on $P=\mathbb{Z}_+^\Omega$ to be regular (see Theorem \ref{thm.Brehmer}). This section explores how Brehmer's result relates to Condition (\ref{eq.right}) without invoking their equivalence to regularity. In particular, we show that Brehmer's condition allows us to decompose certain $X=[\tilde{T}(p_i-p_j)]$ as a product $R^* R$, where $R$ is an upper triangular matrix. 

Let $\{T_\omega\}_{\omega\in\Omega}$ be a family of commuting contractions, which leads to a contractive representation on $\mathbb{Z}_+^\Omega$ by sending each $e_\omega$ to $T_\omega$. For each $U\subseteq\Omega$, denote 
$$Z_U=\sum_{V\subseteq U} (-1)^{|V|} T(e_V)^* T(e_V).$$
For example, 
\begin{eqnarray*}
Z_{\emptyset} &=& I \\
Z_{\{1\}} &=& I-T_1^* T_1 \\
Z_{\{1,2\}} &=& Z_{\{1\}}-T_2^* Z_{\{1\}} T_2 = I-T_1^*T_1-T_2^*T_2+T_2^*T_1^*T_1T_2 \\
&\vdots&
\end{eqnarray*}
Brehmer's theorem stated that $T$ is regular if and only if $Z_U\geq 0$ for any finite subset $U\subseteq\Omega$. We shall first transform Brehmer's condition into an equivalent form. 
\begin{lemma}\label{lm.brehmer1} $Z_U\geq 0$ for each finite subset $U\subseteq \Omega$ if and only if for any finite set $J\subseteq\Omega$ and $\omega\in\Omega$, $\omega\notin J$, $$T_\omega^* Z_J T_\omega\leq Z_J.$$
\end{lemma}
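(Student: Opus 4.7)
The plan is to establish the recursive identity
\[
Z_{J \cup \{\omega\}} \;=\; Z_J - T_\omega^* Z_J T_\omega \qquad (\omega \notin J),
\]
which already appears implicitly in the sample computation for $Z_{\{1,2\}}$ displayed in the excerpt. Once this identity is in hand, the equivalence falls out almost immediately: the condition $T_\omega^* Z_J T_\omega \leq Z_J$ is literally the statement that the right-hand side is positive, i.e.\ that $Z_{J\cup\{\omega\}} \geq 0$.

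To prove the recursive identity, I would split the defining sum for $Z_{J\cup\{\omega\}}$ according to whether the subset $V \subseteq J\cup\{\omega\}$ contains $\omega$ or not. Subsets $V \subseteq J$ contribute exactly $Z_J$. Subsets of the form $V\cup\{\omega\}$ with $V \subseteq J$ come with sign $(-1)^{|V|+1}$; because the $T_\alpha$'s commute, $T(e_{V\cup\{\omega\}}) = T(e_V)T_\omega = T_\omega T(e_V)$, so $T(e_{V\cup\{\omega\}})^*T(e_{V\cup\{\omega\}}) = T_\omega^* T(e_V)^*T(e_V) T_\omega$. Summing over $V \subseteq J$ and pulling $T_\omega^*,T_\omega$ outside yields $-T_\omega^* Z_J T_\omega$, which completes the identity.

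For the forward direction, assume $Z_U \geq 0$ for every finite $U$. Given a finite $J$ and $\omega \notin J$, apply the identity to $U = J\cup\{\omega\}$: we get $Z_J - T_\omega^* Z_J T_\omega = Z_{J\cup\{\omega\}} \geq 0$, which is precisely $T_\omega^* Z_J T_\omega \leq Z_J$.

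For the converse, I would induct on $|U|$. The base case $U = \emptyset$ is clear since $Z_\emptyset = I \geq 0$. For $|U| = n \geq 1$, pick any $\omega \in U$ and set $J = U \setminus \{\omega\}$. By the induction hypothesis $Z_J \geq 0$ (though this is not actually required for the argument, only the identity is), and by the hypothesis applied to the pair $(J,\omega)$, we have $Z_U = Z_J - T_\omega^* Z_J T_\omega \geq 0$. I do not anticipate a real obstacle here; the only substantive content is the bookkeeping in the recursive identity, and this rests entirely on commutativity of the family $\{T_\alpha\}$, which is assumed throughout this section.
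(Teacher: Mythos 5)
Your proposal is correct and follows essentially the same route as the paper: the paper's proof also establishes the identity $Z_{\{\omega\}\cup J}=Z_J-T_\omega^* Z_J T_\omega$ by splitting the defining sum over subsets of $\{\omega\}\cup J$ according to whether they contain $\omega$, and then reads off the equivalence. Your remark that the induction in the converse is unnecessary is also accurate, since any nonempty finite $U$ can be written as $J\cup\{\omega\}$ and the hypothesis applied directly.
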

\begin{proof}
Take any finite subset $J\subseteq\Omega$ and $\omega\in\Omega$, $\omega\notin J$. 
\begin{eqnarray*}
& & Z_J-T_\omega^* Z_J T_\omega\\
&=& \sum_{V\subseteq J} (-1)^{|V|} T(e_V)^* T(e_V) + \sum_{V\subseteq J} (-1)^{|V|+1} T_\omega^*T(e_V)^* T(e_V)T_\omega \\
&=& \sum_{V\subseteq \{\omega\}\bigcup J, \omega\notin V} (-1)^{|V|} T(e_V)^* T(e_V) + \sum_{V\subseteq \{\omega\}\bigcup J, \omega\in V} (-1)^{|V|} T(e_V)^* T(e_V) \\
&=& Z_{\{\omega\}\bigcup J}.
\end{eqnarray*}
Therefore, $T_\omega^* Z_J T_\omega\leq Z_J$ if and only if $Z_{\{\omega\}\bigcup J}\geq 0$. This finishes the proof.\end{proof}

A major tool is the following version of Douglas Lemma \cite{DouglasLemma}:

\begin{lemma}[Douglas]\label{lm.Douglas} For $A,B\in\bh{H}$, $A^*A\leq B^*B$ if and only if there exists a contraction $C$ such that $A=CB$.
\end{lemma}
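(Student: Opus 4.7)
My plan is to treat the two implications separately, with the nontrivial content being the construction of the contraction $C$ from the operator inequality.

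The forward direction is essentially a one-line computation: if $A=CB$ with $\|C\|\leq 1$, then $A^*A=B^*C^*CB\leq B^*B$, using $C^*C\leq I$ together with the fact that conjugation $X\mapsto B^*XB$ preserves the operator order. So the content of the lemma is really in the converse.

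For the converse I would define $C$ first on $\operatorname{range}(B)$ by the formula $C(Bh)=Ah$. The hypothesis $A^*A\leq B^*B$ translates pointwise into $\|Ah\|^2=\langle A^*Ah,h\rangle\leq\langle B^*Bh,h\rangle=\|Bh\|^2$ for every $h\in\mathcal{H}$, and this single inequality accomplishes two things at once. First, if $Bh_1=Bh_2$, then $\|A(h_1-h_2)\|\leq\|B(h_1-h_2)\|=0$, so $Ah_1=Ah_2$ and the assignment $C(Bh)=Ah$ is well-defined on $\operatorname{range}(B)$. Second, the same estimate shows that $C$ is a contraction on $\operatorname{range}(B)$. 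I would then extend $C$ by continuity to the closed subspace $\overline{\operatorname{range}(B)}$, preserving the norm bound, and finally define $C$ to be zero on $\bigl(\overline{\operatorname{range}(B)}\bigr)^{\perp}$. The resulting operator $C\in\bh{H}$ has norm at most one and satisfies $CBh=Ah$ for every $h$, so $A=CB$.

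There is no serious obstacle, and the argument is entirely elementary functional analysis. The only subtle point worth flagging is the interplay in which the one inequality $\|Ah\|\leq\|Bh\|$ simultaneously yields both well-definedness on $\operatorname{range}(B)$ (via kernel containment $\ker B\subseteq\ker A$) and the contractivity needed to bound the extension, which is what makes the whole construction go through in one stroke.
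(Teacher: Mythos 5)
Your proof is correct. The paper does not actually prove this lemma---it is quoted from Douglas's paper with a citation---and your argument is exactly the standard one from that source: the forward direction by conjugation, and the converse by defining $C$ on $\operatorname{range}(B)$ via $C(Bh)=Ah$, using the single inequality $\|Ah\|\leq\|Bh\|$ for both well-definedness and contractivity, then extending by continuity and by zero on the orthogonal complement. Nothing is missing.
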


As an immediate consequence of Lemma \ref{lm.Douglas}, $T_\omega^* Z_J T_\omega\leq Z_J$ is satisfied if and only if there is a contraction $W_{\omega,J}$ such that $Z_J^{1/2} T_\omega=W_{\omega,J} Z_J^{1/2}$. Therefore, it would suffices to find such contraction $W_{\omega,J}$ for each finite subset $J\subseteq\Omega$ and $\omega\in\Omega$, $\omega\notin J$. By symmetry, it would suffices to do so for each $J_n=\{1,2,\cdots,n\}$ and $\omega_n=n+1$. Without loss of generality, we shall assume that $\Omega=\mathbb{N}$. 

Consider $\mathcal{P}(J_n)=\{U\subseteq J_n\}$, and denote $p_U=\sum_{i\in U} e_i\in\mathbb{Z}_+^\Omega$. Denote $X_n=[\tilde{T}(p_U-p_V)]$ where $U$ is the row index and $V$ is the column index.

\begin{lemma}\label{lm.telescope} Assuming $Z_J\geq 0$ for all $J\subseteq J_n$. Then for a fixed $F\subseteq J_n$, we have, $$\sum_{U\subseteq F} T_U^* Z_{F\backslash U} T_U = I.$$
\end{lemma}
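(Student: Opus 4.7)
The plan is to prove this as a purely algebraic identity: the positivity hypothesis $Z_J\geq 0$ is not actually needed for the equation itself (it is presumably carried along because this lemma will be cited in a context where positivity is standing), so what must be verified is a combinatorial cancellation using only the definition of $Z_J$ and the fact that the generators $T_\omega$ pairwise commute.

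First, I will expand the inner factor by its definition, writing
\[
T_U^* Z_{F\setminus U} T_U \;=\; \sum_{V\subseteq F\setminus U} (-1)^{|V|}\, T_U^* T_V^* T_V T_U.
\]
Since $U$ and $V$ are disjoint subsets of $F$ and the generators commute, $T_V T_U = T_{U\cup V}$ (and likewise on the adjoint side), so the right-hand side is $\sum_{V\subseteq F\setminus U} (-1)^{|V|}\, T_{U\cup V}^* T_{U\cup V}$. Summing over $U\subseteq F$ gives
\[
\sum_{U\subseteq F} T_U^* Z_{F\setminus U} T_U \;=\; \sum_{U\subseteq F}\;\sum_{V\subseteq F\setminus U} (-1)^{|V|}\, T_{U\cup V}^*\, T_{U\cup V}.
\]

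Next, I will reindex by $W = U \cup V$. For each fixed $W\subseteq F$, the ordered pairs $(U,V)$ of disjoint subsets with $U\cup V = W$ are in bijection with subsets $U\subseteq W$ (via $V=W\setminus U$), and any such $V$ satisfies $V\subseteq F\setminus U$ automatically. So the coefficient of $T_W^* T_W$ in the total sum is
\[
\sum_{U\subseteq W} (-1)^{|W\setminus U|} \;=\; (-1)^{|W|}\sum_{U\subseteq W}(-1)^{|U|},
\]
which equals $0$ when $W\neq\emptyset$ and equals $1$ when $W=\emptyset$. Only the $W=\emptyset$ term survives, leaving $T_\emptyset^* T_\emptyset = I$.

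There is no real obstacle here; the whole content is a standard inclusion--exclusion cancellation, and the only care needed is bookkeeping the disjointness of $U$ and $V$ so that the commutativity of the $T_\omega$ can be applied to combine them into $T_{U\cup V}$. I would present the argument as the short chain of displayed equalities above, with one sentence pointing out where commutativity is used and one sentence invoking the identity $\sum_{U\subseteq W}(-1)^{|U|} = \delta_{W,\emptyset}$.
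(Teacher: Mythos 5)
Your argument is correct and is essentially identical to the paper's proof: both expand $Z_{F\backslash U}$ by its definition, combine $T_U$ and $T_V$ into $T_{U\cup V}$ for disjoint $U,V$, and observe that the coefficient of $T_W^*T_W$ is the alternating sum $\sum_{U\subseteq W}(-1)^{|W\backslash U|}$, which vanishes unless $W=\emptyset$. Your side remark that the positivity hypothesis is not actually used in the computation is also accurate.
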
 

\begin{proof}

We first notice that by definition, $Z_J=\sum_{U\subseteq J} (-1)^{|U|} T_U^* T_U$. Therefore,
$$\sum_{U\subseteq F} T_U^* Z_{F\backslash U} T_U = \sum_{U\subseteq F} \sum_{V\subseteq F\backslash U} (-1)^{|V|} T_{U\bigcup V}^* T_{U\bigcup V}.$$
For a fixed set $W\subseteq F$, consider the coefficient of $T_W^* T_W$ in the double summation. It appears in the expansion of every $T_U^* Z_{F\backslash U} T_U$, where $U\subseteq W$, and its coefficient in the expansion of such term is equal to $(-1)^{|W\backslash U|}$. Therefore, the coefficient of $T_W^* T_W$ is equal to $$\sum_{U\subseteq W} (-1)^{|W\backslash U|}=\sum_{i=0}^{|W|} {|W|\choose i} (-1)^i.$$
This evaluates to $0$ when $|W|>0$ and $1$ when $|W|=0$, in which case, $W=\emptyset$ and $T_W=I$. \end{proof}

Now can now decompose $X_n=R_n^* R_n$ explicitly.

\begin{proposition}\label{prop.decomp} Assuming $Z_J\geq 0$ for all $J\subseteq J_n$. Define a block matrix $R_n$, whose rows and columns are indexed by $\mathcal{P}(J_n)$, by $R_n(U,V)=Z_{J_n\backslash U}^{1/2} T_{U\backslash V}$ whenever $V\subseteq U$ and $0$ otherwise. Then $X_n=R_n^* R_n$
\end{proposition}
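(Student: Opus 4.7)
My plan is to compute $(R_n^* R_n)(A,B)$ directly from the definition and identify it with $\tilde T(p_A - p_B)$, using Lemma \ref{lm.telescope} as the key tool. First, I note that $R_n(U,A)$ is nonzero only when $A \subseteq U$, so
\[
(R_n^* R_n)(A,B) = \sum_{U \supseteq A \cup B} T_{U\backslash A}^* \, Z_{J_n \backslash U} \, T_{U\backslash B}.
\]
To exploit the telescoping identity, I will re-index the sum by $W = U \backslash (A\cup B)$, so $W$ ranges over subsets of $F := J_n \backslash (A\cup B)$. Under this change of variable, $U\backslash A = (B\backslash A) \sqcup W$ and $U\backslash B = (A\backslash B) \sqcup W$, while $J_n\backslash U = F \backslash W$.

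The second step is to use the fact that the operators $T_\omega$ all commute. Since the index sets $B\backslash A$, $A\backslash B$, and $F\backslash W$ are pairwise disjoint and $Z_{F\backslash W}$ is built from $T_j$'s with $j \in F\backslash W$, all three factors $T_{B\backslash A}$, $T_{A\backslash B}$, and $Z_{F\backslash W}$ mutually commute in the appropriate sense. This lets me pull $T_{B\backslash A}^*$ out on the left and $T_{A\backslash B}$ out on the right, yielding
\[
(R_n^* R_n)(A,B) = T_{B\backslash A}^* \left( \sum_{W \subseteq F} T_W^* \, Z_{F\backslash W} \, T_W \right) T_{A\backslash B}.
\]
By Lemma \ref{lm.telescope} applied with $F$ in place of the ambient set, the parenthesized sum is exactly $I$.

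Finally, I need only identify the surviving product with $\tilde T(p_A - p_B)$. Regarded as an element of $\mathbb{Z}^\Omega$, the positive part of $p_A - p_B$ is $p_{A\backslash B}$ and its negative part is $p_{B\backslash A}$, so $\tilde T(p_A - p_B) = T(p_{B\backslash A})^* T(p_{A\backslash B}) = T_{B\backslash A}^* T_{A\backslash B}$, which is precisely what we obtained. This matches entry-by-entry with $X_n(A,B)$, completing the proof.

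I do not anticipate any serious obstacle here: once the indices are reorganised via $W = U \backslash (A\cup B)$, Lemma \ref{lm.telescope} does all the work, and the only care required is in the bookkeeping of the disjoint unions $U\backslash A$ and $U\backslash B$ and verifying that the commutativity of the $T_\omega$ legitimately lets us pull the factors out. The decomposition of $p_A - p_B$ into its lattice-theoretic $\pm$ parts is immediate in the coordinatewise order on $\mathbb{Z}^\Omega$.
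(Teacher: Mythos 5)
Your proof is correct and follows essentially the same route as the paper's: compute the $(A,B)$-entry of $R_n^*R_n$, observe the sum runs over $U\supseteq A\cup B$, re-index by $W=U\backslash(A\cup B)$, pull out $T_{B\backslash A}^*$ and $T_{A\backslash B}$ using commutativity, and apply Lemma \ref{lm.telescope} to collapse the middle sum to $I$. No gaps; the identification $\tilde T(p_A-p_B)=T_{B\backslash A}^*T_{A\backslash B}$ is exactly the paper's starting observation.
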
 
\begin{proof} Fix $U,V\subseteq J_n$, the $(U,V)$-entry in $X_n$ is $\tilde{T}(p_U-p_V)=T_{V\backslash U}^* T_{U\backslash V}$. Now the $(U,V)$-entry in $R_n^* R_n$ is equal to
$$\sum_{W\subseteq J_n} R_n(W,U)^* R_n(W,V).$$
It follows from the definition that $R_n(W,U)^* R_n(W,V)=0$ unless $U,V\subseteq W$, and thus $U\bigcup V\subseteq W$. Hence,
\begin{eqnarray*}
& &\sum_{W\in\mathcal{P}(J_n)} R_n(W,U)^* R_n(W,V) \\
&=& \sum_{U\bigcup V\subseteq W} T_{W\backslash U}^* Z_{J_n\backslash W} T_{W\backslash V} \\
&=& \sum_{U\bigcup V\subseteq W} T_{V\backslash U}^* T_{W\backslash(U\bigcup V)}^* Z_{J_n\backslash W} T_{W\backslash(U\bigcup V)} T_{W\backslash U} \\
&=& T_{V\backslash U}^*  \left(\sum_{U\bigcup V\subseteq W} T_{W\backslash(U\bigcup V)}^* Z_{J_n\backslash W} T_{W\backslash(U\bigcup V)}\right) T_{W\backslash U}.
\end{eqnarray*} 
If we denote $F=J_n\backslash(U\bigcup V)$ and $W'=W\backslash (U\bigcup V)$, since $U\bigcup V\subseteq W$, we have $J_n\backslash W=F\backslash W'$. Hence the summation becomes $$\sum_{U\bigcup V\subseteq W} T_{W\backslash(U\bigcup V)}^* Z_{J_n\backslash W} T_{W\backslash(U\bigcup V)}=\sum_{W'\subseteq F} T_{W'}^* Z_{F\backslash W'} T_{W'},$$ which by Lemma \ref{lm.telescope} is equal to $I$. Therefore, the $(U,V)$-entry in $R_n^* R_n$ is equal to $T_{V\backslash U}^* T_{W\backslash U}$ and $X_n=R_n^* R_n$
\end{proof}

\begin{remark} If we order the subsets of $J_n$ by cardinality and put larger sets first, then since $R_n(U,V)\neq 0$ only when $V\subseteq U$, $R_n$ becomes a lower triangular matrix. In particular, the row of $\emptyset$ contains exactly one non-zero entry, which is $Z_{J_n}^{1/2}$ at $(\emptyset,\emptyset)$. 
\end{remark}

\begin{example} Let us consider the case when $n=2$, and $J_2$ has 4 subsets $\{1,2\}$, $\{2\}$,$\{1\}$,$\emptyset$. Under this ordering, $$X_n=\begin{bmatrix} I & T_1 & T_2 & T_1T_2 \\ T_1^* & I & T_1^* T_2 & T_2 \\ T_2^* & T_2^* T_1 & I & T_1 \\ T_1^* T_2^* & T_2^* & T_1^* & I \end{bmatrix}.$$ Proposition \ref{prop.decomp} gives that $$R_n=\begin{bmatrix} I & T_1 & T_2 & T_1 T_2 \\ 0 & Z_1^{1/2} & 0 & Z_1^{1/2} T_2 \\ 0 & 0 & Z_2^{1/2}  & Z_2^{1/2}  T_1 \\ 0 & 0 & 0 & Z_{1,2}^{1/2} \end{bmatrix}$$ satisfies $R_n^* R_n=X_n$.
\end{example}
We can now prove Brehmer's condition from Condition (\ref{eq.right}) without invoking their equivalence to regularity.

\begin{proposition} In the case of $T:\mathbb{Z}_+^\Omega\to\bh{H}$, Condition (\ref{eq.right}) implies Brehmer's condition. 
\end{proposition}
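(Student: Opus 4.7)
The plan is to extract Brehmer's condition $Z_J\geq 0$ directly from $(\star)$ by evaluating the matrix inequality on one carefully chosen test vector, without invoking regularity, Proposition \ref{prop.decomp}, or any induction on $|J|$. By Lemma \ref{lm.brehmer1}, for $J=J'\cup\{\omega\}$ with $\omega\notin J'$ the inequality $Z_J\geq 0$ is equivalent to $T_\omega^* Z_{J'} T_\omega\leq Z_{J'}$, so the entire problem reduces to extracting this single operator inequality from $(\star)$.

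To set up the configuration I would fix a finite $J\subseteq\Omega$, pick any $\omega\in J$, let $J'=J\setminus\{\omega\}$, and for each $U\subseteq J'$ put $p_U=\sum_{i\in U} e_i$. Since $\omega\notin J'$, we have $e_\omega\wedge p_U=0$ for every $U\subseteq J'$, so $(\star)$ applied with these $p_U$'s and $g=e_\omega$ delivers the matrix inequality $D^* X_{J'} D\leq X_{J'}$, where $X_{J'}=[\tilde T(p_U-p_V)]_{U,V\subseteq J'}$ and $D=\diag(T_\omega,\ldots,T_\omega)$. Note that no positivity of $X_{J'}$ is required here; $(\star)$ is a two-sided operator inequality that is available on its own.

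The crux is the choice of test vector. For $h\in\mathcal{H}$, I would define $\xi\in\bigoplus_{U\subseteq J'}\mathcal{H}$ by $\xi_U=(-1)^{|U|} T_U h$. Using $\tilde T(p_U-p_V)=T_{V\setminus U}^* T_{U\setminus V}$ together with commutativity in $\mathbb{Z}_+^\Omega$ (so $T_{U\setminus V} T_V=T_{V\setminus U} T_U=T_{U\cup V}$), a short calculation gives $\langle X_{J'}\xi,\xi\rangle=\sum_{U,V\subseteq J'}(-1)^{|U|+|V|}\|T_{U\cup V}h\|^2$. Grouping by $W=U\cup V$ and applying the elementary sign identity $\sum_{U\cup V=W}(-1)^{|U|+|V|}=(-1)^{|W|}$ (for each $w\in W$ the three splittings $w\in U\setminus V$, $w\in V\setminus U$, $w\in U\cap V$ contribute signs $-1,-1,1$, summing to $-1$) collapses this to $\sum_{W\subseteq J'}(-1)^{|W|}\|T_W h\|^2=\langle Z_{J'} h,h\rangle$. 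The identical computation for $D\xi$, using $(D\xi)_U=(-1)^{|U|} T_{U\cup\{\omega\}} h$ and the fact that $T_\omega$ commutes with every $T_W$, produces $\langle X_{J'} D\xi,D\xi\rangle=\langle T_\omega^* Z_{J'} T_\omega h,h\rangle$.

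Feeding these two identities into the inequality $\langle X_{J'} D\xi,D\xi\rangle\leq\langle X_{J'}\xi,\xi\rangle$ supplied by $(\star)$ yields $T_\omega^* Z_{J'} T_\omega\leq Z_{J'}$, hence $Z_J\geq 0$, and we are done. The only nontrivial ingredient is the inclusion--exclusion identity $\sum_{U\cup V=W}(-1)^{|U|+|V|}=(-1)^{|W|}$, which is routine. The real obstacle, such as it is, is simply recognizing that the alternating tensor $\xi_U=(-1)^{|U|} T_U h$ is precisely the probe that converts the matrix inequality $D^* X_{J'} D\leq X_{J'}$ into the one scalar inequality $T_\omega^* Z_{J'} T_\omega\leq Z_{J'}$ that Brehmer's condition asks for.
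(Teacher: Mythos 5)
Your proof is correct, and it takes a genuinely different route from the paper. The paper proceeds by induction on $|J|$: it first builds the explicit triangular factorization $X_n=R_n^*R_n$ of Proposition \ref{prop.decomp} (which itself requires $Z_J\geq 0$ for all smaller $J$, hence the induction hypothesis), then applies Douglas's lemma (Lemma \ref{lm.Douglas}) to $D_n^*R_n^*R_nD_n\leq R_n^*R_n$ and reads off the $(\emptyset,\emptyset)$ corner to get a contraction intertwining $Z_{J_n}^{1/2}$ and $Z_{J_n}^{1/2}T_{n+1}$, which yields $T_{n+1}^*Z_{J_n}T_{n+1}\leq Z_{J_n}$. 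You instead evaluate the quadratic form of $D^*X_{J'}D\leq X_{J'}$ on the single alternating vector $\xi_U=(-1)^{|U|}T_Uh$; the inclusion--exclusion identity $\sum_{U\cup V=W}(-1)^{|U|+|V|}=(-1)^{|W|}$ collapses $\langle X_{J'}\xi,\xi\rangle$ to $\langle Z_{J'}h,h\rangle$ and $\langle X_{J'}D\xi,D\xi\rangle$ to $\langle T_\omega^*Z_{J'}T_\omega h,h\rangle$, and the algebraic identity $Z_{J'}-T_\omega^*Z_{J'}T_\omega=Z_{J'\cup\{\omega\}}$ from Lemma \ref{lm.brehmer1} finishes the argument. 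I have checked the computation ($\tilde T(p_U-p_V)=T_{V\setminus U}^*T_{U\setminus V}$, $T_{U\setminus V}T_V=T_{U\cup V}$, and the sign count) and it is sound. Your argument is shorter and more elementary: it needs no induction at all (the operator inequality $T_\omega^*Z_{J'}T_\omega\leq Z_{J'}$ is extracted directly for every finite $J'$ and $\omega\notin J'$, with no positivity of $Z_{J'}$ assumed along the way), no square roots, and no Douglas lemma. What it does not give you is the factorization $X_n=R_n^*R_n$, which the paper presents as a result of independent interest; your test-vector computation is essentially the shadow of that factorization seen through the $\emptyset$-row, but it stands on its own.
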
 

\begin{proof} Without loss of generality, we may assume $\Omega=\mathbb{N}$. We shall proceed by induction on the size of $J\subseteq \mathbb{N}$.

For $|J|=1$ (i.e. $J=\{\omega\}$), Condition (\ref{eq.right}) implies $T$ is contractive. Hence, $Z_{J}=I-T_\omega^* T_\omega\geq 0$. Assuming $Z_{J}\geq 0$ for all $|J|\leq n$, and consider the case when $|J|=n+1$. By symmetry, it would suffices to show this for $J=J_{n+1}=\{1,2,\cdots,n+1\}$. 

By Proposition \ref{prop.decomp}, $X_n=R_n^* R_n$ where the $(\emptyset,\emptyset)$-entry of $R_n$ is equal to $Z_{J_n}^{1/2}$. Let $D_n$ be a block diagonal matrix with $2^n$ copies of $T_{n+1}$ along the diagonal. Condition (\ref{eq.right}) implies that $$D_n^* X_n D_n =D_n^* R_n^* R_n D_n\leq X_n=R_n^* R_n.$$ Therefore, by Lemma \ref{lm.Douglas}, there exists a contraction $W_n$ such that $W_n R_n=R_n D_n$. By comparing the $(\emptyset,\emptyset)$-entry on both sides, there exists $C_n$ such that $C_n Z_{J_n}^{1/2}=Z_{J_n}^{1/2} T_{n+1}$, where $C_n$ is the $(\emptyset,\emptyset)$-entry of $W_n$, which must be contractive as well. Hence, by Lemma \ref{lm.brehmer1} and \ref{lm.Douglas}, $$Z_{J_{n+1}}=Z_{J_n}-T_{n+1}^* Z_{J_n}T_{n+1}\geq 0.$$ This finishes the proof.
\end{proof} 

\section{Covariant Representations}

The semicrossed products of a dynamical system by Nica-covariant representations was discussed in \cite{Fuller2013, DFK2014}, where its regularity is seen as a key to many results. Our result on the regularity of Nica-covariant representations (Theorem \ref{thm.nc} and Corollary \ref{cor.NicaIso}) allows us to generalize some of the results to arbitrary lattice ordered abelian groups. 

\begin{definition} A $C^*$-dynamical system is a triple $(A,\alpha,P)$ where
\begin{enumerate}
\item $A$ is a $C^*$-algebra;
\item $\alpha:P\to\operatorname{End}(A)$ maps each $p\in P$ to a $*$-endomorphism on $A$;
\item $P$ is a spanning cone of some group $G$.
\end{enumerate}
\end{definition}

\begin{definition} A pair $(\pi,T)$ is called a \emph{covariant pair} for a $C^*$-dynamical system if
\begin{enumerate}
\item $\pi: A\to\bh{H}$ is a $*$-representation;
\item $T:P\to\bh{H}$ is a contractive representation of $P$;
\item $\pi(a) T(s)=T(s) \pi(\alpha_s(a))$ for all $s\in P$ and $a\in A$.
\end{enumerate} 
In particular, a covariant pair $(\pi,T)$ is called Nica-covariant/isometric, if $T$ is Nica-covariant/isometric.
\end{definition}

The main goal is to prove that Nica-covariant pairs on $C^*$-dynamical systems can be lifted to isometric Nica-covariant pairs. This can be seen from \cite[Theorem 4.1.2]{DFK2014} and Corollary \ref{cor.NicaIso}. However, we shall present a slightly different approach by taking the advantage of the structure of lattice ordered abelian group.

\begin{theorem}\label{thm.semi} Let $(A,\alpha,P)$ be a $C^*$-dynamical system over a positive cone $P$ of a lattice ordered abelian group $G$. Let $\pi:A\to\bh{H}$ and $T:P\to\bh{H}$ form a Nica-covariant pair $(\pi,T)$ for this $C^*$-dynamical system. If $V:P\to\mathcal{K}$ is a minimal isometric dilation of $T$, then there is an isometric Nica-covariant pair $(\rho, V)$ such that for all $a\in A$,
$$P_\mathcal{H}\rho(a)\big|_{\mathcal{H}}=\pi(a).$$
Moreover, $\mathcal{H}$ is invariant for $\rho(a)$.
\end{theorem}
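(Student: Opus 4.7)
My plan is to define $\rho$ explicitly on the dense subspace $\operatorname{span}\{V(p)h:p\in P,\,h\in\mathcal{H}\}$ of $\mathcal{K}$ by
\[\rho(a)\sum_i V(p_i)h_i \;:=\; \sum_i V(p_i)\pi(\alpha_{p_i}(a))h_i,\]
and then to check that this formula extends to a $*$-representation of $A$ on $\mathcal{K}$ with the desired properties. Note that $V$ is already Nica-covariant by Corollary~\ref{cor.NicaIso}.

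Two preliminary facts drive the argument. First, the minimality of $V$ forces $\mathcal{H}$ to be coinvariant, with $V(p)^*|_\mathcal{H}=T(p)^*$: a direct check against $V(q)k$ via $\langle V(p)^*h,V(q)k\rangle=\langle h,V(pq)k\rangle=\langle T(p)^*h,T(q)k\rangle=\langle T(p)^*h,V(q)k\rangle$ identifies $V(p)^*h$ with $T(p)^*h\in\mathcal{H}$ on a spanning set. Second, the covariance $\pi(b)T(s)=T(s)\pi(\alpha_s(b))$ and its adjoint $T(s)^*\pi(b)=\pi(\alpha_s(b))T(s)^*$ combine to yield $\pi(b)T(s)T(s)^*=T(s)T(s)^*\pi(b)$ and $\pi(\alpha_s(b))T(s)^*T(s)=T(s)^*T(s)\pi(\alpha_s(b))$ for every $b\in A$, $s\in P$. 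Together with the abelian Nica-covariance of $T$ these identities make the positive operators $\pi(\alpha_\cdot(b))$, $T(s)T(s)^*$, and $T(s)^*T(s)$ mutually commute for any family of $s$'s with pairwise meet $e$.

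The main obstacle is the operator inequality
\[\bigl\|\textstyle\sum_i V(p_i)\pi(\alpha_{p_i}(a))h_i\bigr\|^2 \;\leq\; \|a\|^2\bigl\|\textstyle\sum_i V(p_i)h_i\bigr\|^2,\]
which simultaneously secures well-definedness and $\|\rho(a)\|\leq\|a\|$. Applying the Nica-covariance relation $V(p_j)^*V(p_i)=V(s_{ij})V(s_{ji})^*$ with $s_{ij}=p_i(p_i\wedge p_j)^{-1}$, then coinvariance, then the covariance intertwinings, the defect rearranges into
\[\sum_{i,j}\bigl\langle (\|a\|^2I-\pi(\alpha_{p_i\vee p_j}(a^*a)))\,T(s_{ji})^*h_i,\,T(s_{ij})^*h_j\bigr\rangle.\]
To show this is nonnegative I would first factor out $V(\wedge_i p_i)$, which is an isometry and hence preserves norms, reducing to the case $\wedge_i p_i=e$. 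I would then induct on $n$, completing the square in $h_1$. The commutations from the second paragraph, in particular the key identity $N_{jj}T(s_{1j})T(s_{j1})^*=T(s_{1j})T(s_{j1})^*N_{11}$ for $N_{ij}=\|a\|^2I-\pi(\alpha_{p_i\vee p_j}(a^*a))$, are precisely what is needed to realign the cross terms so that after completing the square the remaining quadratic form in $(h_2,\ldots,h_n)$ falls under the inductive hypothesis.

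Once boundedness is secured, the remaining verifications are routine. Multiplicativity $\rho(ab)=\rho(a)\rho(b)$ follows from $\alpha_p$ being a $*$-endomorphism; $\rho(a^*)=\rho(a)^*$ from the symmetry of the inner-product computation; the covariance $\rho(a)V(p)=V(p)\rho(\alpha_p(a))$ from the defining formula together with $\alpha_{pq}=\alpha_p\alpha_q$ and $pq=qp$ (the only place abelianness of $G$ is truly essential); invariance of $\mathcal{H}$ under $\rho(a)$ by specializing the defining formula to $p=e$; and the dilation identity $P_\mathcal{H}\rho(a)|_\mathcal{H}=\pi(a)$ is then automatic.
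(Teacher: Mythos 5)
Your setup is sound and most of your intermediate claims check out: the coinvariance $V(p)^*\big|_\mathcal{H}=T(p)^*$ does follow from minimality exactly as you argue; the reduction of the defect $\|a\|^2\|\sum_i V(p_i)h_i\|^2-\|\sum_i V(p_i)\pi(\alpha_{p_i}(a))h_i\|^2$ to the quadratic form $\sum_{i,j}\langle N_{ij}T(s_{ji})^*h_i,T(s_{ij})^*h_j\rangle$ is correct (using $V(p_j)^*V(p_i)=V(s_{ij})V(s_{ji})^*$, coinvariance, and $s_{ji}p_i=p_i\vee p_j$); the commutation $N_{jj}T(s_{1j})T(s_{j1})^*=T(s_{1j})T(s_{j1})^*N_{11}$ is a genuine consequence of the two covariance intertwinings; and the verifications in your last paragraph are indeed routine once boundedness is in hand. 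Your route also differs from the paper's: the paper proves well-definedness by computing the matrix coefficients $\langle\rho(a)V(p)h_1,V(t)h\rangle$ against the spanning vectors $V(t)h$, using only the regularity identity $P_\mathcal{H}V(s)^*V(t)\big|_\mathcal{H}=T(s)^*T(t)$ for $s\wedge t=e$ together with the covariance relations, and then gets contractivity from the single-vector estimate $\|V(p)\pi(\alpha_p(a))h\|\le\|a\|\,\|V(p)h\|$; it never confronts your operator inequality on linear combinations.

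The gap is the positivity of your quadratic form, which carries the entire analytic weight of your plan and is only gestured at. ``Induct on $n$ and complete the square in $h_1$'' does not close as stated: after completing the square, the residual form in $(h_2,\dots,h_n)$ has coefficients equal to $N_{ij}$ (suitably conjugated) minus correction terms built from $T(s_{1j})$, $T(s_{j1})^*$ and $N_{11}$, and these are no longer of the shape $\|a\|^2I-\pi(\alpha_{q}(a^*a))$, so the inductive hypothesis as you have formulated it does not apply to the reduced problem. Moreover, even after factoring out $V(\wedge_i p_i)$, the elements $p_1,\dots,p_n$ can have arbitrary nontrivial meets on proper subsets (e.g.\ $(1,1,0),(0,1,1),(1,0,1)$ in $\mathbb{Z}_+^3$); every positivity statement of this type in the paper (Theorem~\ref{thm.main}, Lemma~\ref{lm.rcbase} and the proposition following it) requires a two-level induction, on $n$ and on the largest cardinality of a subset with nontrivial meet, together with a reduction step in the spirit of Lemma~\ref{lm.reduce}. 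I checked that your plan does succeed for $n=2$, but for general $n$ you would need to strengthen the inductive statement substantially. Note finally that this heavy lifting is avoidable: once $\rho$ is shown to be a well-defined, multiplicative, adjoint-preserving map on the invariant dense domain $\operatorname{span}\{V(p)h\}$ (all of which you can get from the matrix-coefficient computation), boundedness follows from the $C^*$-identity, since $\|\rho(a)\xi\|^2=\langle\rho(a^*a)\xi,\xi\rangle\le\|a\|^2\|\xi\|^2$ using $\rho(\|a\|^2-a^*a)=\rho(c^*c)\ge0$ on the domain; this bypasses your inequality entirely.
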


\begin{proof}

Fix a minimal dilation $V$ of $T$ and consider any $h\in\mathcal{H}$, $p\in P$, and $a\in A$: define $$\rho(a) V(p) h = V(p) \pi(\alpha_p(a)) h$$ We shall first show that this is a well defined map. First of all, since $V$ is a minimal isometric dilation, the set $\{V(p) h\}$ is dense in $\mathcal{K}$. Suppose $V(p) h_1=V(s) h_2$ for some $p,s\in P$ and $h_1,h_2\in\mathcal{H}$. It suffices to show that for any $t\in P$ and $h\in\mathcal{H}$, we have 
\begin{equation}
\left\langle V(p) \pi(\alpha_p(a)) h_1, V(t) h\right\rangle=\left\langle V(s) \pi(\alpha_s(a)) h_2, V(t) h\right\rangle.
\end{equation}
Since $A$ is a $C^*$-dynamical system, it follows from the covariant condition $\pi(a) T(s)=T(s)\pi(\alpha_s(a))$ that $T(s)^* \pi(a)=\pi(\alpha_s(a)) T(s)^*$. Hence, 
\begin{eqnarray*}
& & \left\langle V(p) \pi(\alpha_p(a)) h_1, V(t) h\right\rangle \\
&=& \left\langle V(t)^*V(p) \pi(\alpha_p(a)) h_1, h\right\rangle \\
&=& \left\langle V(t-t\wedge p)^*V(p-t\wedge p) \pi(\alpha_p(a)) h_1, h\right\rangle \\
&=& \left\langle T(t-t\wedge p)^*T(p-t\wedge p) \pi(\alpha_p(a)) h_1, h\right\rangle \\
&=& \left\langle \pi(\alpha_{p-(p-t\wedge p)+(t-t\wedge p)}(a)) T(t-t\wedge p)^* T(p-t\wedge p)h_1, h\right\rangle \\
&=& \left\langle \pi(\alpha_{t}(a)) T(t-t\wedge p)^* T(p-t\wedge p)h_1, h\right\rangle.
\end{eqnarray*}
Here we used that fact that $V$ is regular and thus $$P_\mathcal{H} V(t-t\wedge p)^* V(p-t\wedge p)\big|_\mathcal{H}=T(t-t\wedge p)^* T(p-t\wedge p).$$ Now notice that 
\begin{eqnarray*}
T(t-t\wedge p)^* T(p-t\wedge p)h_1 &=& P_\mathcal{H} V(t-t\wedge p)^* V(p-t\wedge p) h_1 \\
&=& P_\mathcal{H} V(t)^* V(p) h_1.
\end{eqnarray*}
Similarly, 
$$\left\langle V(s) \pi(\alpha_s(a)) h_2, V(t) h\right\rangle= \left\langle \pi(\alpha_{t}(a)) T(t-t\wedge s)^* T(s-t\wedge s)h_2, h\right\rangle,$$
where $$T(t-t\wedge s)^* T(s-t\wedge s)h_2= P_\mathcal{H} V(t)^* V(s) h_2=P_\mathcal{H} V(t)^* V(p) h_1.$$
Therefore, $\rho$ is well defined on the dense subset $\{V(p) h\}$. 

Since $V(p)$ is isometric and $\pi,\alpha$ are completely contractive, $$\|V(p) \pi(\alpha_p(a)) h\|=\|\pi(\alpha_p(a)) h\|\leq\|h\|=\|V(p) h\|,$$
and thus $\rho(a)$ is contractive on $\{V(p) h\}$. Hence, $\rho(a)$ can be extended to a contractive map on $\mathcal{K}$. Moreover, for any $h\in\mathcal{H}$ and $a\in A$, we have $\rho(a) h = \pi(a) h\in\mathcal{H}$, and thus $\mathcal{H}$ is invariant for $\rho$. For any $a,b\in A$, $p\in P$, and $h\in\mathcal{H}$, 
\begin{eqnarray*}
\rho(a)\rho(b) V(p) h &=&  V(p) \pi(\alpha_p(a))\pi(\alpha_p(b)) h \\
&=& V(p) \pi(\alpha_p(ab)) h \\
&=& \rho(ab) V(p) h.
\end{eqnarray*}
Therefore, $\rho$ is a contractive representation of $A$ and thus a $*$-representation. Now for any $p,t\in P$ and $h\in\mathcal{H}$, 
\begin{eqnarray*}
\rho(a)V(p) V(t)h &=& V(p+t) \pi(\alpha_{p+t}(a))h \\
&=& V(p) V(t) \rho(\alpha_{p+t}(a))h\\
&=& V(p) \rho(\alpha_p(a)) V(t) h.
\end{eqnarray*}
Hence, $(\rho,V)$ is an isometric Nica-covariant pair. \end{proof}

This lifting of contractive Nica-covariant pairs to isometric Nica-covariant pairs has significant implication in its associated semi-crossed product. A family of covariant pairs gives rise to a semi-crossed product algebra in the following way \cite{Fuller2013, DFK2014}. For a $C^*$-dynamical system $(A,\alpha,P)$, denote $\mathcal{P}(A,P)$ be the algebra of all formal polynomials $q$ of the form
$$q=\sum_{i=1}^n e_{p_i} a_{p_i},$$
where $p_i\in P$ and $a_{p_i}\in A$. The multiplication on such polynomials follows the rule that $a e_s = e_s \alpha(a)$ and $e_p e_q=e_{pq}$. For a covariant pair $(\sigma,T)$ on this dynamical system, define a representation of $\mathcal{P}(A,P)$ by $$(\sigma\times T)\left(\sum_{i=1}^n e_{p_i} a_{p_i}\right)=\sum_{i=1}^n T(p_i) \sigma(a_{p_i}).$$
Now let $\mathcal{F}$ be a family of covariant pairs on this dynamical system. We may define a norm on $\mathcal{P}(A,S)$ by
$$\|p\|_\mathcal{F}=\sup\{(\sigma\times T)(p):(\sigma,T)\in\mathcal{F}\},$$
and the semi-crossed product algebra is defined as 
$$A\times_\alpha^\mathcal{F} P=\overline{\mathcal{P}(A,S)}^{\|\cdot\|_\mathcal{F}}.$$
In particular, $A \times_\alpha^{nc} P$ is determined by the Nica-covariant representations, and $A \times_\alpha^{nc,iso} P$ is determined by the isometric Nica-covariant representation. As an immediate corollary from Theorem \ref{thm.main} and \ref{thm.semi},

\begin{corollary} For a $C^*$-dynamical system $(A,\alpha,P)$, the semi-crossed product algebra given by Nica-covariant pairs agrees with that given by isometric Nica-covariant pairs. In other words, $$A \times_\alpha^{nc} P \cong A \times_\alpha^{nc,iso} P.$$
\end{corollary}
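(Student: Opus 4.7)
The plan is to show that the two algebras are canonically isometrically isomorphic by establishing that the seminorms $\|\cdot\|_{nc}$ and $\|\cdot\|_{nc,iso}$ agree on the common polynomial algebra $\mathcal{P}(A,P)$. Since every isometric Nica-covariant pair is in particular a Nica-covariant pair, the inequality $\|p\|_{nc,iso}\leq\|p\|_{nc}$ is immediate from the definition of the seminorms as suprema. The whole content is therefore the reverse inequality.

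For the reverse direction, I would fix a Nica-covariant pair $(\pi,T)$ acting on $\mathcal{H}$ and apply Theorem \ref{thm.semi} to obtain an isometric Nica-covariant pair $(\rho,V)$ on $\mathcal{K}\supseteq\mathcal{H}$ dilating $(\pi,T)$, with the extra property that $\mathcal{H}$ is invariant for $\rho(a)$ for every $a\in A$. The key observation is then that $(\pi\times T)(q)$ is the compression of $(\rho\times V)(q)$ to $\mathcal{H}$: for a monomial $e_{p}a$ and $h\in\mathcal{H}$, invariance of $\mathcal{H}$ under $\rho$ gives $\rho(a)h=\pi(a)h\in\mathcal{H}$, so
\begin{equation*}
P_\mathcal{H}(V(p)\rho(a))\big|_\mathcal{H}h = P_\mathcal{H} V(p)\pi(a)h = T(p)\pi(a)h.
\end{equation*}
Summing over monomials yields $(\pi\times T)(q)=P_\mathcal{H}(\rho\times V)(q)|_\mathcal{H}$, hence $\|(\pi\times T)(q)\|\leq\|(\rho\times V)(q)\|\leq\|q\|_{nc,iso}$. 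Taking the supremum over all Nica-covariant pairs gives $\|q\|_{nc}\leq\|q\|_{nc,iso}$.

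Having shown $\|\cdot\|_{nc}=\|\cdot\|_{nc,iso}$ on $\mathcal{P}(A,P)$, the identity map on $\mathcal{P}(A,P)$ extends to an isometric $*$-isomorphism of the two completions, which is precisely the claimed isomorphism $A\times_\alpha^{nc} P\cong A\times_\alpha^{nc,iso} P$.

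The only real subtlety, and what I would expect to be the main obstacle, is verifying the compression identity $(\pi\times T)(q)=P_\mathcal{H}(\rho\times V)(q)|_\mathcal{H}$ cleanly. Without the invariance of $\mathcal{H}$ under $\rho(a)$, one would only get that $\pi(a)$ is a compression of $\rho(a)$, which would not in general be compatible with taking products. The invariance clause in Theorem \ref{thm.semi} is exactly what lets the compression pass through the product $V(p)\rho(a)$, so the proof reduces entirely to invoking Theorem \ref{thm.semi} and bookkeeping the compressions.
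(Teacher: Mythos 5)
Your proof is correct and is precisely the argument the paper intends: the paper gives no written proof, asserting only that the corollary is immediate from Theorems \ref{thm.main} and \ref{thm.semi}, and your dilation-plus-compression argument --- with the invariance of $\mathcal{H}$ under $\rho$ being exactly what lets the compression pass through the products $V(p)\rho(a)$ --- is the standard way to fill in those details. One cosmetic remark: the completions are nonselfadjoint operator algebras, so the identity map extends to an isometric (indeed completely isometric, by running the same compression argument on matrices over $\mathcal{P}(A,P)$) isomorphism rather than a $*$-isomorphism.
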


\section*{Acknowledgements}

I would like to thank Professor Ken Davidson for pointing out this area of research and giving me directions. I would also like to thank Adam Fuller and Evgenios Kakariadis for many valuable comments.

\bibliographystyle{plain}
\bibliography{NicaCovPaper}

\end{document}